\theoremstyle{plain}
\newtheorem{theorem}{Theorem}[section]
\newtheorem{proposition}[theorem]{Proposition}
\newtheorem{lemma}[theorem]{Lemma}
\theoremstyle{definition}
\newtheorem{assumption}[theorem]{Assumption}
\theoremstyle{remark}
\newtheorem{remark}{Remark}[section]
\numberwithin{equation}{section}
\numberwithin{theorem}{section}
\newcommand{\mc}[1]{{\mathcal #1}}
\newcommand{\bs}[1]{{\boldsymbol #1}}
\newcommand{\bb}[1]{{\mathbb #1}}
\newcommand{\rmd}{\mathrm{d}}
\newcommand{\eps}{\varepsilon}
\def\const{({\rm const.})\,}
\newcommand{\supp}{\mathop{\rm supp}\nolimits}
\renewcommand{\div}{\mathop{\rm div}\nolimits}
\title[Concentrated vortex rings]{Global time evolution of concentrated vortex rings}
\author[P.\ Butt\`a]{Paolo Butt\`a}
\address{Dipartimento di Matematica\\
Sapienza Universit\`a di Roma\\
P.le Aldo Moro 5, 00185 Roma\\
Italy}
\email{butta@mat.uniroma1.it}
\author[G.\ Cavallaro]{Guido Cavallaro}
\address{Dipartimento di Matematica\\
Sapienza Universit\`a di Roma\\
P.le Aldo Moro 5, 00185 Roma\\
Italy}
\email{cavallar@mat.uniroma1.it}
\author[C.\ Marchioro]{Carlo Marchioro}
\address{Dipartimento di Matematica\\
Sapienza Universit\`a di Roma\\
P.le Aldo Moro 5, 00185 Roma\\
Italy\\
and\\
International Research Center M\&MOCS\\ 
Universit\`a di L'Aquila\\
Palazzo Caetani\\
04012 Cisterna di Latina (LT)\\
Italy}
\email{marchior@mat.uniroma1.it}
\keywords{Incompressible Euler flow, vortex rings.}
\subjclass{
76B47, 
37N10. 
}
\date{}
\begin{document}

\begin{abstract}
We study the time evolution of an incompressible fluid with axial symmetry without swirl, assuming initial data such that the initial vorticity is very concentrated inside $N$ small disjoint rings of thickness $\eps$ and vorticity mass of the order of $|\log\eps|^{ -1}$.  When $\eps \to 0$ we show that the motion of each vortex ring converges to a simple translation with constant speed (depending on the single ring) along the symmetry axis.  We obtain a sharp localization of the vorticity support at time $t$ in the radial direction, whereas we state only a concentration property in the axial direction. This is obtained for arbitrary (but fixed) intervals of time. This study is the completion of a previous paper \cite{BuM2}, where a sharp localization of the vorticity support was obtained both along the radial and axial directions, but the convergence for $\eps\to 0$ worked only for short times.
\end{abstract}

\maketitle

\section{Introduction and main result}
\label{sec:1}

We study the time evolution of an incompressible non viscous fluid in the whole space $\bb R^3$, with an axial symmetry without swirl when the vorticity is sharply concentrated on $N$ annuli of radii $r_i\approx r_0$ and thickness $\eps$. In particular, we consider the limit $\eps \to 0$. In a previous paper of some years ago a similar problem \cite{BCM00} was investigated for a vortex alone, showing that it translates with a constant speed. Recently, in \cite{BuM2}, the analysis has been extended to the case of $N$ vortices, also getting a stronger localization property, but restricted to the case of short but positive time. In the present paper, we study the problem for any time.

The motion of an incompressible inviscid fluid is governed by the Euler equations, that for a fluid of unitary density in three dimension with velocity $\bs u = \bs u(\bs\xi,t)$ decaying at infinity read 
\begin{equation}
\label{vorteq}
\partial_t \bs\omega + (\bs u\cdot \nabla) \bs\omega  = (\bs \omega\cdot \nabla) \bs u  \,, \qquad 
\end{equation}
\begin{equation}
\label{u-vort}
\bs u(\bs\xi,t) = - \frac{1}{4\pi} \int_{\bb R^3}\! \rmd \bs\eta \, \frac{(\bs\xi-\bs\eta) \wedge \bs\omega(\bs\eta,t)}{|\bs\xi-\bs\eta|^3} \,,
\end{equation}
where $\bs\omega = \bs \omega(\bs\xi,t) = \nabla \wedge \bs u(\bs\xi,t)$ is the vorticity, $\bs\xi = (\xi_1,\xi_2,\xi_3)$ denotes a point in $\bb R^3$, and $t\in \bb R_+$ is the time. The equations are completed by the initial conditions. It is worthwhile to emphasize that the incompressibility condition $\nabla\cdot \bs u=0$ is clearly verified in view of Eq.~\eqref{u-vort}.

Denoting by $(z,r,\theta)$ the cylindrical coordinates, we recall that the vector field $\bs F$ of cylindrical components $(F_z, F_r, F_\theta)$ is called axisymmetric without swirl if $F_\theta=0$ and $F_z$ and $F_r$ are independent of $\theta$. 

The axisymmetry is preserved by the evolution Eqs.~\eqref{vorteq}, \eqref{u-vort}. Moreover, when restricted to axisymmetric velocity fields $\bs u(\bs\xi,t) = (u_z(z,r,t), u_r(z,r,t), 0)$, the vorticity is
\begin{equation}
\label{omega}
\bs\omega = (0,0,\omega_\theta) = (0,0,\partial_z u_r - \partial_r u_z)
\end{equation}
and, denoting henceforth $\omega_\theta$ by $\omega$, Eq.~\eqref{vorteq} reduces to
\begin{equation}
\label{omeq}
\partial_t \omega + (u_z\partial_z + u_r\partial_r) \omega - \frac{u_r\omega}r = 0 \,.
\end{equation}
Finally, by Eq.~\eqref{u-vort}, $u_z = u_z(z,r,t)$ and $u_r=u_r(z,r,t)$ are given by
\begin{align}
\label{uz}
u_z & = - \frac{1}{2\pi} \int\! \rmd z' \!\int_0^\infty\! r' \rmd r' \! \int_0^\pi\!\rmd \theta \, \frac{\omega(z',r',t) (r\cos\theta - r')}{[(z-z')^2 + (r-r')^2 + 2rr'(1-\cos\theta)]^{3/2}} \,,
\\ \label{ur}
u_r & = \frac{1}{2\pi} \int\! \rmd z' \!\int_0^\infty\! r' \rmd r' \! \int_0^\pi\!\rmd \theta \, \frac{\omega(z',r',t) (z - z')}{[(z-z')^2 + (r-r')^2 + 2rr'(1-\cos\theta)]^{3/2}} \,.
\end{align}
In conclusion, the axisymmetric solutions to the Euler equations are the solutions to Eqs.\ \eqref{omeq}, \eqref{uz}, and \eqref{ur}.

We notice that Eq.~\eqref{omeq} means that the quantity $\omega/r$ remains constant along the flow generated by the velocity field, i.e., 
\begin{equation}
\label{cons-omr}
\frac{\omega(z(t),r(t),t)}{r(t)} = \frac{\omega(z(0),r(0),0)}{r(0)} \,,
\end{equation}
with $(z(t),r(t))$ solution to
\begin{equation}
\label{eqchar}
\dot z(t) = u_z(z(t),r(t),t) \,, \qquad \dot r(t) = u_r(z(t),r(t),t) \,.
\end{equation}

We notice that in the case of non-smooth initial data, Eqs.~\eqref{uz}, \eqref{ur}, \eqref{cons-omr}, and \eqref{eqchar} can be assumed as a weak formulation of the Euler equations in the framework of axisymmetric solutions. An equivalent weak formulation is obtained from Eq.~\eqref{omeq} by a formal integration by parts,
\begin{equation}
\label{weq}
\frac{\rmd}{\rmd t} \omega_t[f] = \omega_t[u_z\partial_z f + u_r\partial_r f + \partial_t f] \,,
\end{equation}
where $f = f(z,r,t)$ is any bounded smooth test function and 
\[
\omega_t[f] := \int\! \rmd z \!\int_0^\infty\! \rmd r \, \omega(z,r,t) f(z,r,t) \,.
\]

It is known that the global (in time) existence and uniqueness of a weak solution to the associate Cauchy problem holds when initial vorticity is a bounded function with compact support contained in the open half-plane $\Pi:=\lbrace(z,r):r>0\rbrace$, see, for instance, \cite[Page 91]{MaP94} or \cite[Appendix]{CS}. In particular, it can be shown that the support of the vorticity remains in the open half-plane $\Pi$ at any time (note that a point in the half-plane $\Pi$ corresponds to a circumference in the three dimensional space $\bb R^3$).

The special class of axisymmetric solutions without swirl are known in literature as
\textit{smoke  rings} (or \textit{vortex rings}),  because of the property to preserve  
their shape in time, which translates with a constant speed along the $z$-axis.
The knowledge of such solutions is very old, but
a first rigorous proof of existence and properties of these solutions (in the stationary case) 
goes back to \cite{FrB74, AmS89},  by means of variational methods. Other information and references on axially symmetric solution without swirl can be found in \cite{ShL92}.

We consider in the present paper  the special class of initial data for which the vorticity is initially very concentrated. We mean that, given a small parameter $\eps\in (0,1)$, we take initial data for which the vorticity has compact support contained in $N$ disks, that is
\begin{equation}
\label{in}
\omega_\eps(z,r,0) = \sum_{i=1}^N  \omega_{i,\eps}(z,r,0) \,,
\end{equation}
where $\omega_{i,\eps}(z,r,0)$, $i=1,\ldots, N$, are functions with definite sign whose support
is contained  in $\Sigma(\xi|\rho)$, which is the open disk of center $\xi$ and radius $\rho$,
\begin{equation}
\label{initial}
\Lambda_{i,\eps}(0) := \supp\, \omega_{i,\eps}(\cdot,0) \subset \Sigma(\zeta^i|\eps),
\end{equation}
with 
\[
\overline{\Sigma(\zeta^i|\eps)} \subset\Pi\quad \forall\, i\,, \qquad  \Sigma(\zeta^i|\eps)\cap \Sigma(\zeta^j|\eps)=\emptyset\quad \forall\, i \ne j,
\]
for fixed $\zeta^i = (z_i,r_i)\in \Pi$. We assume also that
\begin{equation}
\label{2D}
\min_i r_i > 2D\quad \forall\, i\,, \qquad |r_i-r_j| \ge 2D \quad \forall\, i \ne j \,,
\end{equation}
where $D$ is a positive fixed constant. This means that the annuli have different radii. 

In view of \eqref{cons-omr}, the decomposition Eq.~\eqref{in} extends to positive time setting
\begin{equation}
\label{in-t}
\omega_\eps(z,r,t) = \sum_{i=1}^N  \omega_{i,\eps}(z,r,t) \,,
\end{equation}
with $\omega_{i,\eps}(x,t)$ the time evolution of the $i$th vortex ring,
\begin{equation}
\label{cons-omr_ni}
\omega_{i,\eps}(z(t),r(t),t) := \frac{r(t)}{r(0)} \omega_{\eps,i}(z(0),r(0),0)\,.
\end{equation}

We focus on the case of a fluid with a large vorticity concentration. Therefore, in order to have non trivial (i.e., neither vanishing nor diverging) limiting velocities of the vortex rings, the initial data have to be chosen appropriately. The correct choice can be inferred by considering the simplest case of a vortex ring alone, of intensity $N_\eps =: \int\! \rmd z \int _0^\infty\! \rmd r \, \omega_\eps(z,r,0)$ and supported in a small region of diameter $\eps$. It is well known that it moves along the $z$-direction with an approximately constant speed proportional to $N_\eps |\log \eps|$, see \cite {Fr70}. With this in mind, we assume that there are $N$ real parameters $a_1,\ldots, a_N$, called \textit{vortex intensities}, such that
\begin{equation}
\label{ai}
|\log\eps| \int\!\rmd z \!\int_0^\infty\!\rmd r\, \omega_{i,\eps}(z,r,0) = a_i \quad \forall\,i=1,\ldots,N \,.
\end{equation}
Finally, to avoid too large vorticity concentrations, we further assume there is a constant $M>0$ such that 
\begin{equation}
\label{Mgamma}
|\omega_{i,\eps}(z,r,0)| \le \frac{M}{\eps^2|\log\eps|} \quad \forall\, (z,r)\in \Pi \quad \forall\, i=1,\ldots,N\,.
\end{equation}
Now, we can state the main result of the paper.

\begin{theorem}
\label{thm:1}
Assume the initial data $\omega_\eps(x,0)$ verify Eqs.~\eqref{in}, \eqref{initial}, \eqref{2D}, \eqref{ai}, and \eqref{Mgamma}, and define
\begin{equation}
\label{free_m}
\zeta^i(t) :=  \zeta^i + \frac{a_i}{4\pi r_i} \begin{pmatrix} 1 \\ 0 \end{pmatrix} t \,, \quad i=1,\ldots,N\,.
\end{equation}
Then, for any $T>0$ the following holds true. For any $\eps$ small enough there are $\zeta^{i,\eps}(t)\in \Pi$, $t\in [0,T]$, $ i=1,\ldots,N$, and $R_\eps>0$ such that
\[
\lim_{\eps\to 0}|\log\eps| \int_{\Sigma(\zeta^{i,\eps}(t)|R_\eps)}\!\rmd z\,\rmd r\, \omega_{i,\eps}(z,r,t) = a_i\quad \forall\, i=1,\ldots,N, \quad \forall\, t\in [0,T]\,,
\]
with
\[
\lim_{\eps\to 0} R_\eps = 0, \qquad \lim_{\eps\to 0} \zeta^{i,\eps}(t) = \zeta^i(t) \quad \forall\, t\in [0,T]\,.
\]
\end{theorem}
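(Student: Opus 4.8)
The plan is to track the ``center'' of each vortex ring via a weighted center of vorticity and show that, up to errors vanishing as $\eps\to 0$, it follows the free motion \eqref{free_m}, while simultaneously proving that the support of each $\omega_{i,\eps}(\cdot,t)$ stays in a shrinking disk around that center. Define for each $i$ the (normalized) center
\[
B_{i,\eps}(t) := \frac{1}{m_{i,\eps}} \int_\Pi \rmd z\,\rmd r\, \begin{pmatrix} z \\ r \end{pmatrix} \omega_{i,\eps}(z,r,t)\,, \qquad m_{i,\eps} := \int_\Pi \rmd z\,\rmd r\, \omega_{i,\eps}(z,r,t)\,,
\]
where $m_{i,\eps}$ is conserved only approximately (the exact conserved quantity is $\int \omega/r$, so one works with that or controls the discrepancy using the radial localization). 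Using the weak formulation \eqref{weq} with test functions $f(z,r,t)=z$ and $f(z,r,t)=r$, one computes $\dot B_{i,\eps}(t)$ in terms of $\omega_{i,\eps}[u_z]$ and $\omega_{i,\eps}[u_r]$; the self-induced part of $u_z$ contributes the leading term $\frac{a_i}{4\pi r_i}$ (after using the known asymptotics of the single-ring velocity, cf.\ \cite{Fr70,BCM00}), the mutual interaction between rings $i\ne j$ is $O(1/|\log\eps|)$ since the rings stay at mutual distance $\ge D$ by \eqref{2D}, and the $r$-component of the self-induced velocity integrates to a lower-order term. The main quantitative input needed here is a bound on how spread out $\omega_{i,\eps}(\cdot,t)$ is; this is where one must invoke a localization estimate.

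Second, and this is the heart of the matter, one must prove the \emph{a priori} confinement: there is $R_\eps\to 0$ (e.g.\ $R_\eps = \eps^\beta$ for a suitable $\beta\in(0,1)$, or $R_\eps$ of order a power of $|\log\eps|^{-1}$) such that, outside $\Sigma(B_{i,\eps}(t)|R_\eps)$, the vorticity mass of the $i$th ring is $o(1/|\log\eps|)$ uniformly on $[0,T]$. The strategy is the standard moment-of-inertia / iterated-energy argument of Marchioro--Pulvirenti type adapted to the axisymmetric setting: introduce $I_{i,\eps}(t) := \int |(z,r)-B_{i,\eps}(t)|^2 \omega_{i,\eps}$, bound $\dot I_{i,\eps}$ using \eqref{weq}, splitting the velocity into the self-interaction (which is antisymmetric under reflection through $B_{i,\eps}$ in the leading order, hence contributes a controlled amount) and the mutual and curvature ($u_r\omega/r$-type) corrections, and then bootstrap: a crude bound on the bulk of the mass inside a disk of radius $\sim\eps^\gamma$ (coming from \eqref{Mgamma}, which forces the mass not inside such a disk to be small initially and, by a Gronwall/continuity argument in time, to stay small) feeds back into a sharper bound. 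The radial direction is easier because $r$ is essentially transported with small drift (the radial velocity is small), giving the \emph{sharp} radial localization claimed; the axial direction is where only the weaker ``concentration'' statement survives, exactly because the self-induced $u_z$ has a $|\log\eps|$-size piece whose spatial variation across the (shrinking) support is hard to control to leading order. One should lean on the short-time result of \cite{BuM2} for the structure of these estimates and extend the time interval by iterating on a partition $0=t_0<t_1<\cdots<t_K=T$, re-centering at each step.

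Third, with confinement in hand, close the argument: the error terms in $\dot B_{i,\eps}(t)$ are all $o(1)$ uniformly on $[0,T]$ once $R_\eps\to 0$, so $B_{i,\eps}(t)\to \zeta^i(t)$ by Gronwall, and one sets $\zeta^{i,\eps}(t):=B_{i,\eps}(t)$. The mass statement $\lim_\eps |\log\eps|\int_{\Sigma(\zeta^{i,\eps}(t)|R_\eps)}\omega_{i,\eps} = a_i$ then follows from \eqref{ai}, the conservation of $\int\omega_{i,\eps}/r$ combined with the radial localization (which pins $r(t)\approx r_i$ on the support, so $\int\omega_{i,\eps}\approx r_i\int\omega_{i,\eps}/r = \,$const$\,\approx a_i/|\log\eps|$), and the confinement estimate showing the mass outside $\Sigma(\zeta^{i,\eps}(t)|R_\eps)$ is negligible on the scale $|\log\eps|^{-1}$.

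\medskip

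\textbf{Main obstacle.} The crux is the global-in-time confinement estimate in the axial direction with a quantitative, $\eps$-uniform rate: the self-induced velocity carries a term of size $|\log\eps|$, so naive Gronwall estimates lose the smallness after a time of order $1/|\log\eps|$ (this is precisely why \cite{BuM2} was restricted to short times). The resolution must exploit a cancellation — that the dangerous large term is, to leading order, a pure translation common to the whole ring and hence drops out of the \emph{relative} moment $I_{i,\eps}(t)$ once one subtracts $B_{i,\eps}(t)$ — so that only the genuinely smaller (logarithmically subleading, or $D$-separated mutual) contributions drive the growth of the spread. Making this subtraction rigorous, controlling the remainder of the single-ring velocity expansion on the shrinking support, and handling the $u_r\omega/r$ curvature term are the technically delicate points; the radial sharpness, by contrast, comes comparatively cheaply from the smallness of $u_r$.
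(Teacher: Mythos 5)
Your high-level architecture matches the paper's: reduce to a single ring in an external field (the mutual interaction is Lipschitz with constants $O(1/|\log\eps|)$ thanks to \eqref{2D}), track the center of vorticity, prove confinement, and close with a continuity argument in time. You also correctly locate the obstacle in the axial direction. However, the proposal is missing the one idea that actually makes the result global in time, and the two concrete mechanisms you offer in its place would not work. First, you propose to control the full relative moment $I_{i,\eps}(t)=\int |(z,r)-B_{i,\eps}(t)|^2\omega_{i,\eps}$ and to rely on a cancellation of the large self-induced translation. The paper instead controls only the \emph{axial} moment of inertia $\int (r-B_{\eps,2}(t))^2\omega_\eps$. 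The point is the decomposition $H=K+L+\mc R$ of Eqs.~\eqref{sH}--\eqref{sR}: the non-Lipschitz (but bounded) piece $L(x,y)\sim \frac{1}{4\pi x_2}\log\frac{1+|x-y|}{|x-y|}$ is directed purely along the $z$-axis, so it drops out \emph{exactly} from the evolution of the axial moment, yielding the sharp bound $I_\eps(t)\le C|\log\eps|^{-2}$ (Lemma \ref{lem:2}) and hence an iteration constant $O(|\log\eps|^q)$ with $q<1$ in the mass-escape estimate (Lemma \ref{lem:mt}), which converges for all $t\le T$. The full moment you propose picks up the $L$ contribution and only satisfies $J_\eps(t)\le C|\log\eps|^{-1}$; feeding that into the iteration gives $q=1$ and reproduces exactly the short-time restriction of \cite{BuM2}. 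Your claim that the radial sharpness ``comes comparatively cheaply from the smallness of $u_r$'' understates this: the radial localization is where all the work is, and it rests on the $L_2=0$ observation, not merely on $u_r$ being small.

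Second, your fallback of iterating the short-time result over a partition $0=t_0<\cdots<t_K=T$ with re-centering cannot close: after one short-time step the support is only confined at scale $|\log\eps|^{-k}$, not $\eps$, so the hypothesis needed to restart the scheme is lost and the estimates degrade at each step. Finally, to identify the translation speed $a_i/(4\pi r_i)$ rigorously one needs more than the radial localization: the leading term of $\dot B_{\eps,1}$ is the double integral of $\frac{1}{4\pi x_2}\log\frac{1+|x-y|}{|x-y|}$ against $\omega_\eps\otimes\omega_\eps$, and extracting the factor $1/(4\pi r_0)$ with the correct logarithmic weight requires knowing that almost all of the vorticity sits in a disk of radius $\eps|\log\eps|$ (the energy-based concentration Lemma \ref{lem:4}, proved in the appendix). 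Your sketch invokes ``known asymptotics of the single-ring velocity'' at this point, but that is precisely the step that must be proved for the evolved, possibly deformed blob; without the concentration lemma the argument does not close.
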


\begin{remark}
\label{rem:1}
For the sake of concreteness, we make the assumption Eq.~\eqref{2D} which guarantees $|\zeta^i(t)-\zeta^j(t)|\ge 2D$ for any $i\ne j$ and $t\ge 0$. On the other hand, as it will be clear from the proof, the result is true for any choice of initial conditions $\{\zeta^i\}$ and intensities $\{a_i\}$ provided that the trajectories $\{\zeta^i(t)\}$ remain separate from each other at any positive time. While, in the general case, the statement of the theorem remains valid only for $T<T_*$, where $T_*$ is the first collapsing time (obviously, the initial data which produce collapses are exceptional).
\end{remark}

\begin{remark}
\label{rem:2}
Sometimes, the Euler equations are considered with initial vorticity highly concentrated around a generic curve, say $\Gamma= \{\bs \gamma_\sigma\}_{\sigma\in [0,1]} \subset \bb R^3$, see, e.g., \cite{MaB02}. Of course, additional assumptions are needed to analyze the time evolution. Here, the main feature is the so-called LIA approximation (local induction approximation), in which the vorticity remains concentrated around a \textit{vortex filament} $\Gamma(t)= \{\bs\gamma_\sigma(t)\}_{\sigma\in [0,1]}$, whose time velocity $\dot{\bs \gamma}_\sigma(t)$ depends on the curvature and is directed along the binormal vector. In the present paper, we present a situation in which this approximation is rigorously derived.
\end{remark}

\begin{remark}
\label{rem:3}
The effect of a viscosity perturbation in the derivation of the vortex model has been discussed in the literature  \cite{CS,Mar90,Mar98,Mar07,Gal11}, but this topic is out of the purposes of the present analysis.
\end{remark}

\begin{remark}
\label{rem:4}
In this paper, we show that for certain classes of concentrated initial data the time evolution is closely related to the dynamics of a particular system of particles. Actually, the relation between the solution of the Euler Equations and time evolution of some special particle systems is more general and it is at the basis of an approximation method, called ``vortex method'', widely used in literature, see, e.g., \cite{CGP14} or the textbook \cite{MaP94}.
\end{remark}

The strategy in the proof of Theorem \ref{thm:1} is the same of the previous works on the topics. (We quote here only the more recent ones \cite{BuM1,BuM2},  and address the reader to the references therein). We first show the corresponding result for a ``reduced system'', where a vortex ring alone moves under the action of a suitable  external time-dependent vector field. The result for the original model is then achieved by treating the motion of each vortex ring as that of a reduced system, in which the external field describes the force due to its interaction with the other rings.

The key tool in the planar case \cite{BuM1} is a sharp a priori estimate on the moment of inertia, which is not available in the axial symmetric case because the velocity field is not a Lipschitz function. To overcome this problem, in \cite{BuM2} the energy conservation is used to control the growth in time of the moment of inertia, which allows us to build up an iterative scheme to deduce the sharp localization property, but the price to pay is that this scheme converges only for short times. 

Theorem \ref{thm:1} extends the result of \cite{BuM2} globally in time, and the strategy behind this improvement relies in the following observation. A suitable decomposition of the velocity field shows that its non Lipschitz part is directed along the $z$-axis, which suggests that the vorticity should stay more localized along the radial direction. Indeed, this is true and allows us to deduce a sharper estimate on a different quantity, the ``axial moment of inertia''. This new estimate makes possible to build up an iterative scheme as in \cite{BuM2}, but here convergent at any positive time, thus deducing a sharp localization property globally in time. 

The plan of the paper is the following. In the next section we introduce the reduced system and prove Theorem \ref{thm:1} as a corollary of the analogous result for this system, which is proved in Sections \ref{sec:3} and \ref{sec:4}. Finally, in Appendix \ref{app:a} we extend to the reduced system a concentration property of the vorticity distribution, proved in \cite{BCM00} for the case of a vortex alone. This property is necessary to characterize the axial motion and its proof relies on an accurate control on the time variation of the energy.

\section{Reduction to a single vortex problem}
\label{sec:2}

We rename the variables by letting
\begin{equation}
\label{nv}
x = (x_1,x_2) := (z,r)
\end{equation}
and extend the vorticity to a function on the whole plane by setting $\omega_\eps(x,t) = 0$ for $x_2\le 0$, so that $x=(x_1,x_2) \in\bb R^2$ henceforth. In this way, the equations of motion Eqs.~\eqref{uz}, \eqref{ur}, \eqref{cons-omr}, and \eqref{eqchar} take the following form,
\begin{equation}
\label{u=}
u(x,t) = \int\!\rmd y\, H(x,y)\, \omega_\eps(y,t)\,,
\end{equation}
\begin{equation}
\label{cons-omr_n}
\omega_\eps(x(t),t) = \frac{x_2(t)}{x_2(0)} \omega_\eps(x(0),0) \,, 
\end{equation}
\begin{equation}
\label{eqchar_n}
\dot x(t) = u(x(t),t) \,,
\end{equation}
where $u(x,t) = (u_1(x,t), u_2(x,t))$ and the kernel $H(x,y) = (H_1(x,y),H_2(x,y))$ is given by
\begin{align}
\label{H1}
H_1(x,y) & = \frac{1}{2\pi} \int_0^\pi\!\rmd \theta \, \frac{y_2(y_2 - x_2\cos\theta)}{\big[|x-y|^2 + 2x_2y_2(1-\cos\theta)\big]^{3/2}} \,,
\\ \label{H2}
H_2(x,y) & = \frac{1}{2\pi} \int_0^\pi\!\rmd \theta \, \frac{y_2 (x_1-y_1) \cos\theta}{\big[|x-y|^2 + 2x_2y_2(1-\cos\theta)\big]^{3/2}} \,.
\end{align}

The ``reduced system'' describes the motion of a single vortex ring in a suitable external time-dependent vector field, which simulates the interaction with the other vortices. This system is defined by Eqs.~\eqref{u=}, \eqref{cons-omr_n}, and, in place of Eq.~\eqref{eqchar_n}, 
\begin{equation}
\label{eqchar_nF}
\dot x(t) = u(x(t),t) + F^\eps(x(t),t)\,.
\end{equation}
The initial datum $\omega_\eps(x,0)$ and the time dependent vector field $F^\eps$  are assumed to satisfy the following conditions. 

\begin{assumption}
\label{ass:1}
The function $\omega_\eps(x,0)$ is non-negative (resp.~non-positive) and there is $M>0$ and $a>0$ (resp.~$a<0$) such that 
\begin{equation}
\label{MgammaF}
0 \le |\omega_\eps(x,0)| \le \frac{M}{\eps^2|\log\eps|} \quad \forall\, x\in\bb R^2\,, \qquad |\log\eps|\int\!\rmd y\, \omega_\eps(y,0) =a\,.
\end{equation}
Moreover, there exists $\zeta^0 = (z_0,r_0)$, with $r_0>0$, such that
\begin{equation}
\label{initialF}
\Lambda_\eps(0) := \supp\, \omega_\eps(\cdot,0) \subset \Sigma(\zeta^0|\eps)\,.
\end{equation}
Finally, $F^\eps=(F^\eps_1,F^\eps_2)$ is a continuous and globally Lipschitz vector field, and it enjoys the following properties.
\begin{itemize}
\item[(a)] $\bs F^\eps = (F^\eps_z,F^\eps_r,F^\eps_\theta) := (F^\eps_1,F^\eps_2,0)$ has zero divergence, i.e., $\partial_{x_1}(x_2 F^\eps_1) + \partial_{x_2}(x_2 F^\eps_2) = 0$.
\item[(b)] There exist $C_F, L >0$ such that, for any $\eps\in (0,1)$ and $t\ge 0$,
\begin{equation}
	\label{2Lipsc}
	|F^\eps(x,t)| \le \frac{C_F}{|\log\eps|}\,, \quad |F^\eps(x,t) - F^\eps(y,t)| \le \frac{L}{|\log\eps|} |x-y|\qquad \forall\,x,y\in\bb R^2\,.
\end{equation}
\end{itemize}
\end{assumption}

\begin{theorem}
\label{thm:2}
Under Assumption \ref{ass:1}, let
\begin{equation}
\label{zett}
\zeta(t) =  \zeta^0+ \frac{a}{4\pi r_0} \begin{pmatrix} 1 \\ 0 \end{pmatrix} t\,.
\end{equation}
Then, for each $T>0$ the following holds true.

\begin{itemize}
\item[(1)] For any $k\in \big(0,\frac 14\big)$ there is $C_k>0$ such that, for any $\eps$ small enough,
\[
\Lambda_\eps(t) := \supp\, \omega_\eps(\cdot,t) \subset \{x\in \bb R^2 \colon |x_2-r_0| \le C_k |\log\eps|^{-k}\} \quad \forall\,t\in [0,T]\,.
\]
\item[(2)] For any $\eps$ small enough there are $\zeta^\eps(t)\in \Pi$, $t\in [0,T]$, and $\varrho_\eps>0$ such that
\[
\lim_{\eps\to 0}|\log\eps| \int_{\Sigma(\zeta^\eps(t)|\varrho_\eps)}\!\rmd x\, \omega_\eps(x,t) = a \,,
\]
with
\[
\lim_{\eps\to 0} \varrho_\eps = 0, \quad \lim_{\eps\to 0} \zeta^\eps(t) = \zeta(t) \,.
\]
\end{itemize}
\end{theorem}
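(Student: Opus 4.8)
The plan is to prove Theorem~\ref{thm:2} via a bootstrap/iteration argument built around moment-of-inertia type estimates, following the scheme of \cite{BuM2} but with the crucial substitution of the \emph{axial moment of inertia} for the full moment of inertia. First I would set up the basic bookkeeping: normalize so that $\omega_\eps \ge 0$, write $m_\eps := \int \omega_\eps(x,t)\,\rmd x = a/|\log\eps|$ (conserved by \eqref{cons-omr_n} together with the divergence-free property of $u$ and of $\bs F^\eps$), and introduce the ``center'' $B^\eps(t) := m_\eps^{-1}\int x\,\omega_\eps(x,t)\,\rmd x$. Differentiating $B^\eps$ in time and using \eqref{eqchar_nF} one gets $\dot B^\eps(t) = m_\eps^{-1}\int (u + F^\eps)\,\omega_\eps\,\rmd x$; the self-interaction term $\int u\,\omega_\eps$ is handled by symmetrizing the kernel $H$ — one shows that the antisymmetric part of $H_1$ contributes the leading $\frac{a}{4\pi r_0}$ translation (this is the computation producing the constant in \eqref{zett}, essentially the classical Kelvin speed, extracted by isolating the logarithmically divergent part of $H_1(x,y)$ as $x\to y$ and using the concentration), the symmetric remainder being controlled once a localization scale is known, while $m_\eps^{-1}\int F^\eps\,\omega_\eps = F^\eps(B^\eps,t) + O(L|\log\eps|^{-1}\cdot \mathrm{radius})$ by the Lipschitz bound \eqref{2Lipsc}. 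So the whole argument hinges on a quantitative control of how spread out $\Lambda_\eps(t)$ is.

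Next I would establish the radial localization, item~(1), which is the genuinely new ingredient. Define the axial moment of inertia $I^\eps(t) := \int (x_2 - r_0)^2\,\omega_\eps(x,t)\,\rmd x$ (more precisely centered at the radial coordinate of $B^\eps(t)$, but $r_0$ works up to lower order since the radial drift is $o(1)$). Differentiating and using \eqref{eqchar_nF}, $\dot I^\eps = 2\int (x_2-r_0)\,(u_2 + F^\eps_2)\,\omega_\eps\,\rmd x$. The key structural point, advertised in the introduction, is that the non-Lipschitz (logarithmically singular) part of the velocity lives in the $x_1$-component: $H_2(x,y)$ in \eqref{H2} carries the extra factor $(x_1-y_1)$, so near the diagonal $H_2$ is bounded (indeed $H_2(x,y) = O(1)$ uniformly, with only the $x_1$-difference providing the ``missing'' power), whereas $H_1$ has the $|\log|x-y||$ blow-up. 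Consequently $|\int (x_2-r_0)\,u_2\,\omega_\eps\,\rmd x| \le C m_\eps \sqrt{I^\eps}$ plus genuinely bounded error, and the $F^\eps_2$ term is bounded by $C_F m_\eps |\log\eps|^{-1} \sqrt{I^\eps}$-type quantities; altogether $\dot I^\eps \le C m_\eps \sqrt{I^\eps} + (\text{bounded})$, which together with $I^\eps(0) \le M \eps^2|\log\eps|^{-1}\cdot\pi\eps^2 \to 0$ and $m_\eps = a|\log\eps|^{-1}$ gives $I^\eps(t) \le C|\log\eps|^{-1}$ on $[0,T]$ — much better than the $|\log\eps|^{-1}\log|\log\eps|$ or worse one would get from the full moment. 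From a Chebyshev argument this forces all but an arbitrarily small fraction of the vorticity mass to lie within $|x_2 - r_0| \le |\log\eps|^{-1/2+\delta}$, and then a standard ``no mass escapes'' argument (if a positive fraction of the vorticity were at radial distance $\ge C_k|\log\eps|^{-k}$ with $k<1/4$, the contribution to $I^\eps$ would violate the bound; to upgrade this to \emph{all} the support one iterates, using at each step the velocity bound restricted to the already-localized bulk, exactly as in \cite{BCM00,BuM2}) yields $\Lambda_\eps(t) \subset \{|x_2-r_0|\le C_k|\log\eps|^{-k}\}$ for every $k<1/4$.

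With the sharp radial localization in hand, item~(2) follows by the iterative scheme of \cite{BuM2}, now convergent on all of $[0,T]$ precisely because the radial spread is controlled by an $\eps$-power of $|\log\eps|^{-1}$ rather than growing in $t$. Here I would also bring in the concentration property proved in Appendix~\ref{app:a} (the extension of \cite{BCM00}): energy conservation — more precisely an accurate bound on the time variation of the regularized energy $\int\!\!\int \omega_\eps(x,t) G(x,y)\omega_\eps(y,t)\,\rmd x\,\rmd y$ under the perturbed dynamics \eqref{eqchar_nF}, the extra work coming from the $F^\eps$ term and estimated via \eqref{2Lipsc} — shows that the vorticity cannot split into two separated lumps, so that a single center $\zeta^\eps(t)$ (e.g. $B^\eps(t)$, or a point where a fixed fraction of mass concentrates) captures essentially all the mass in a disk $\Sigma(\zeta^\eps(t)|\varrho_\eps)$ with $\varrho_\eps\to 0$. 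Feeding the localization back into the center-of-vorticity ODE from the first paragraph gives $\dot\zeta^\eps(t) = \frac{a}{4\pi r_0}\binom{1}{0} + F^\eps(\zeta^\eps(t),t) + o(1)$; integrating and comparing with \eqref{zett} (note \eqref{zett} itself ignores $F^\eps$, which is legitimate since $|F^\eps|\le C_F|\log\eps|^{-1}\to 0$, so its net displacement over $[0,T]$ vanishes) yields $\zeta^\eps(t)\to\zeta(t)$ uniformly on $[0,T]$.

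I expect the main obstacle to be the radial estimate $\dot I^\eps \le C m_\eps\sqrt{I^\eps} + O(1)$: one must extract with care the precise way the logarithmic singularity of the kernel distributes between $H_1$ and $H_2$, control the self-interaction term $\int(x_2-r_0)u_2\omega_\eps$ without circular reliance on a localization one has not yet proved (this is why the argument must be run as a bootstrap, starting from only the crude a priori bound $\Lambda_\eps(t)\subset$ bounded region that follows from $|u|+|F^\eps|\le C|\log\eps|^{-1}\cdot(\cdots)$ and $m_\eps$ small), and make sure the constants are uniform in $\eps$ and in $t\in[0,T]$. The secondary difficulty is the energy/concentration argument of the appendix: estimating $\frac{\rmd}{\rmd t}$ of the energy under the non-conservative perturbation $F^\eps$ and showing the correction is $o(1)$ on $[0,T]$ requires the Lipschitz bound \eqref{2Lipsc} to be used in a somewhat delicate way together with the already-established localization.
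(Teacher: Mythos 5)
Your overall architecture matches the paper's: decompose the velocity, control an \emph{axial} moment of inertia, run the iterative mass-concentration scheme of \cite{BuM2} for item (1), and combine the energy-based concentration lemma with the center-of-vorticity ODE for item (2). However, there is a genuine gap at the heart of your item (1), and it sits exactly where you yourself predicted the main obstacle would be. Your structural claim that ``$H_2(x,y)=O(1)$ uniformly'' near the diagonal is false: $H_2$ contains the full planar singularity $K_2(x-y)\sim (x_1-y_1)/(2\pi|x-y|^2)$, which blows up like $1/|x-y|$. What is actually true, and what the paper uses, is a two-part statement: (i) the singular planar part $K$ of the kernel contributes \emph{exactly zero} to $\frac{\rmd}{\rmd t}\int x_2^2\,\omega_\eps$, because $M_2=\int x_2^2\,\omega_\eps$ is a conserved quantity of the free axisymmetric dynamics (antisymmetry/impulse conservation), and (ii) the non-Lipschitz logarithmic correction $L(x,y)$ in \eqref{bc_bound} has vanishing second component. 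Hence the only surviving contributions to $\dot I_\eps$ are the external field $F^\eps_2=O(|\log\eps|^{-1})$ and the bounded remainder $\mc R_2$ integrated against the total mass $O(|\log\eps|^{-1})$, each carrying an \emph{extra} factor $|\log\eps|^{-1}$ beyond what a merely ``bounded'' radial velocity would give. This yields $|\dot I_\eps|\le C|\log\eps|^{-3/2}\sqrt{I_\eps}+C|\log\eps|^{-2}$ and therefore $I_\eps(t)\le C|\log\eps|^{-2}$, as in \eqref{Iee}.

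This is not a cosmetic difference: your weaker conclusion $I^\eps(t)\le C|\log\eps|^{-1}$ is quantitatively insufficient to close the argument. In the iteration (the analogue of Lemma \ref{lem:mt}), the term $C I_\eps(t)/(h^2R^2)$ must be bounded by $C|\log\eps|^{q}$ with $q<1$ in order for the $n=\lfloor|\log\eps|\rfloor$-fold Gronwall iteration to produce a super-polynomially small mass outside radius $|\log\eps|^{-k}$; with $I_\eps\sim|\log\eps|^{-\theta}$ this forces $k<(\theta-1)/4$ (see Remark \ref{rem:rc}), so $\theta=1$ gives an empty range of $k$ and the scheme does not converge for any $k>0$ — which is precisely the obstruction that confined \cite{BuM2} to short times. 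Likewise, your intermediate Chebyshev step (``all but a small fraction of the mass lies within $|\log\eps|^{-1/2+\delta}$'') already presupposes $I_\eps\lesssim|\log\eps|^{-2}$: with total mass $|\log\eps|^{-1}$ and $I_\eps\le C|\log\eps|^{-1}$, the mass outside radial distance $d$ is only bounded by $C d^{-2}|\log\eps|^{-1}$, i.e.\ a fraction $Cd^{-2}$ of the whole, which is not small for $d\to 0$. A secondary, more minor imprecision: the translation speed $a/(4\pi r_0)$ does not come from an ``antisymmetric part of $H_1$'' but from the symmetric, non-Lipschitz term $L(x,y)$ of \eqref{bc_bound} (the antisymmetric planar part $K$ contributes nothing to $\dot B_\eps$); the evaluation then uses the concentration at scale $\eps|\log\eps|$ from Lemma \ref{lem:4} to extract the factor $|\log\eps|$ from $\log\frac{1}{|x-y|}$. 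Your third paragraph is otherwise consistent with the paper's treatment of item (2).
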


\subsection{Proof of Theorem \ref{thm:1}}
\label{sec:2.3}

Given $T$ as in the statement of the theorem, we fix $R<D$ and let 
\[
T_\eps := \max\left\{t\in [0,T] \colon |x_2 - r_i| \le R \;\; \forall\, x\in \Lambda_{i,\eps}(s) \;\; \forall\, s\in [0,t]\;\;\forall\, i=1,\ldots,N \right\}.
\]
By continuity, from Eq.~\eqref{in} and \eqref{initial} it follows that $T_\eps >0$ for any $\eps$ sufficiently small. Moreover, in view of Eq.~\eqref{2D}, for any $t\in [0,T_\eps ]$ the rings evolve with supports $\Lambda_{i,\eps}(t)$ that remain separated from each other by a distance larger than or equal to $2(D-R)$, and hence their mutual interaction remains bounded and Lipschitz. More precisely, during the time interval $[0,T_\eps]$, the $i$-th vortex ring $\omega_{i,\eps}(x,t)$ evolves according to a reduced system, with external field in Eq.~\eqref{eqchar_nF} given by
\begin{equation}
\label{fk1}
F^{i,\eps}(x,t) = \sum_{j: j\ne i} \int\!\rmd y\, \tilde H(x,y)\, \omega_{j,\eps}(y,t)\;,
\end{equation}
where $\tilde H(x,y)$ is any smooth kernel such that, e.g., $\tilde H (x,y) = H(x,y)$ if $|x-y|\ge D-R$. In view of the explicit form Eqs.~\eqref{H1}, \eqref{H2} of $H$, and the assumption Eq.~\eqref{ai}, $\tilde H$ can be chosen such that $\bs F^{i,\eps} := (F^{i,\eps}_1,F^{i,\eps}_2,0)$ has zero divergence\footnote{This mollification is obtained by modifying the stream function associated to the field, which always exists for axisymmetric flow without swirl \cite[Section 2]{FrB74}.} and, for some constant $\overline C>0$, any $i,j=1,\ldots, N$, and $t\in [0,T_\eps]$,
\[
|F^{i,\eps}(x,t)| \le \frac{\overline C}{|\log\eps|}, \quad |F^{i,\eps}(x,t) - F^{j,\eps}(y,t)| \le \frac{\overline C}{|\log\eps|} |x-y|\quad \forall\,x,y\in\bb R^2.
\]
We then apply Theorem \ref{thm:2} to the evolution of the $i$-th vortex ring, with parameters $(a_i,\zeta^i,T,k)$ in place of $(a,\zeta^0,T,k)$, and conclude that, for any $\eps$ small enough,

\smallskip\noindent
(1) $ |x_2 - r_i| \le C_k|\log\eps|^{-k}$ for any $x\in \Lambda_{i,\eps}(t)$, $t\in[0, T_\eps]$, and $i=1,\ldots,N$,

\smallskip\noindent
(2) there are $\zeta^{i,\eps}(t)\in \Pi$, $i=1,\ldots,N$, and $\varrho_\eps>0$ such that
\[
\lim_{\eps\to 0}|\log\eps| \int_{\Sigma(\zeta^{i,\eps}(t)|\varrho_\eps)}\!\rmd x\, \omega_\eps(x,t) = a_i\,,
\]
with
\[
\lim_{\eps\to 0} \varrho_\eps = 0, \quad \lim_{\eps\to 0} \zeta^{i,\eps}(t) = \zeta^i(t) \,.
\]
By continuity, $T_\eps=T$ for any $\eps$ small enough, and Theorem \ref{thm:1} is thus proved.
\qed

\section{The reduced system: analysis of the radial motion}
\label{sec:3}

The proof Theorem \ref{thm:2} is split in two parts. The first one, which is the content of the present section, concerns the sharp localization property of the vorticity along the radial direction as stated in item (1) of Theorem \ref{thm:2}. Without loss of generality, we consider the case $a=1$, hence Eq.~\eqref{MgammaF} reads
\begin{equation}
\label{MgammaFb}
0 \le \omega_\eps(x,0) \le \frac{M}{\eps^2|\log\eps|} \quad \forall\, x\in\bb R^2, \qquad |\log\eps|\int\!\rmd y\, \omega_\eps(y,0) =1.
\end{equation}

The following weak formulation will be used, which is a direct generalization of Eq.~\eqref{weq},
\begin{equation}
\label{weqF}
\frac{\rmd}{\rmd t} \int\!\rmd x\, \omega_\eps(x,t) f(x,t) = \int\!\rmd x\, \omega_\eps(x,t) \big[ (u+F^\eps) \cdot \nabla f + \partial_t f \big](x,t) \,,
\end{equation}
where $f = f(x,t)$ is any bounded smooth test function. Moreover, the kernel $H(x,y)$ in Eq.~\eqref{u=} can be split as made in \cite[Lemma 3.3]{BuM2}, where it is shown that the most singular part of $H(x,y)$ is given by the kernel $K(x-y)$ corresponding to the planar case, 
\begin{equation}
\label{vel-vor3}
K(x) = \nabla^\perp  G(x)\,, \quad G(x) := - \frac{1}{2\pi} \log|x|\,,
\end{equation}
where $v^\perp :=(v_2,-v_1)$ for $v = (v_1,v_2)$. More precisely, for any $x,y\in \Pi$,
\begin{equation}
\label{sH}
H(x,y) = K(x-y) + L(x,y) + \mc R(x,y)\,,
\end{equation}
where
\begin{equation}
\label{bc_bound}
L(x,y) = \frac{1}{4\pi x_2} \log\frac {1+|x-y|}{|x-y|}\begin{pmatrix} 1 \\ 0 \end{pmatrix}
\end{equation}
and there exists $C_0>0$ such that, for any $x,y\in \Pi$,
\begin{equation}
\label{sR}
|\mc R(x,y)| \le C_0 \frac{1+x_2+\sqrt{x_2y_2} \big(1+ |\log(x_2y_2)|\big)}{x_2^2}\,.
\end{equation}

\noindent
\textit{A notation warning:} In what follows, we shall denote by $C$ a generic positive constant, whose numerical value may change from line to line and it may possibly depend on the parameters $\zeta^0=(z_0,r_0)$ and $M$ appearing in Theorem \ref{thm:2} and Eq.~\eqref{MgammaFb}, as well as on the given time $T$.

\smallskip
As claimed at the beginning of the section, our goal is to show that, under Assumption \ref{ass:1}, for any $T>0$ and $k\in\big(0,\frac 14\big)$, if $\eps$ is small enough then
\begin{equation}
\label{eq:prop1}
|x_2 - r_0| \le \frac{C}{|\log\eps|^k} \qquad \forall\, x\in \Lambda_\eps(t) \quad \forall\, t\in [0,T]\,.
\end{equation}

We let
\[
T^0_\eps := \max\left\{t\in [0,T] \colon \frac{r_0}{2} \leq x_2 \leq \frac32 r_0\;\; \forall\, x \in\Lambda_\eps(s)\;\; \forall\, s\in [0,t] \right\}
\]
and assume hereafter $\eps < r_0/2$ so that $T^0_\eps >0$ in view of Eq.~\eqref{initialF}. In what follows, we show that, for any $k\in\big(0,\frac 14\big)$,
\begin{equation}
\label{stimGa}
|x_2 - r_0| \le \frac{C}{|\log\eps|^k} \qquad \forall\, x\in \Lambda_\eps(t) \quad \forall\, t\in [0,T^0_\eps]\,, 
\end{equation}
provided $\eps$ is small enough. By continuity, this implies that $T^0_\eps = T$ (for $\eps$ sufficiently small), from which Eq.~\eqref{eq:prop1} follows for $\eps$ sufficiently small. 

The proof of Eq.~\eqref{stimGa} is quite long, so it is divided into three preliminary lemmas plus a conclusion. Preliminarily, it is useful to decompose the velocity field according to  Eq.~\eqref{sH}, writing
\begin{equation}
\label{decom_u}
u(x,t) = \widetilde u(x,t)  + \int\!\rmd y\, L(x,y)\, \omega_\eps(y,t) +  \int\!\rmd y\, \mc R(x,y)\, \omega_\eps(y,t) \,,
\end{equation}
where $\widetilde u(x,t)=\int\!\rmd y\, K(x-y)\, \omega_\eps(y,t)$. 

\begin{lemma}
\label{lem:1}
The following estimates hold true,
\begin{equation}
\label{bc_bound2}
\int\!\rmd y\, |L(x,y)|\, \omega_\eps(y,t) \le C \,, \quad \int\!\rmd y\, |\mc R(x,y)| \, \omega_\eps(y,t) \le \frac{C}{|\log\eps|} \qquad \forall\, t\in [0,T^0_\eps]\,.
\end{equation}
\end{lemma}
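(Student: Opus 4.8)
The plan is to treat both bounds as elementary potential estimates for the density $\omega_\eps(\cdot,t)$, using only the size of its mass and of its sup-norm together with the a priori confinement $r_0/2\le x_2\le 3r_0/2$ on $\Lambda_\eps(t)$ for $t\in[0,T^0_\eps]$; throughout, $x$ is taken in $\Lambda_\eps(t)$, which is the only regime in which these estimates are invoked.

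First I would record the two a priori bounds on $\omega_\eps(\cdot,t)$. Taking $f\equiv 1$ in the weak formulation \eqref{weqF} gives $\frac{\rmd}{\rmd t}\int\!\rmd x\,\omega_\eps(x,t)=0$, so by \eqref{MgammaFb}
\[
\int\!\rmd x\,\omega_\eps(x,t)=\frac1{|\log\eps|}\qquad\forall\,t\ge 0 .
\]
Moreover, \eqref{cons-omr_n} together with $x_2(t)\le\tfrac32 r_0$ and $x_2(0)\ge r_0-\eps\ge\tfrac{r_0}2$ on $[0,T^0_\eps]$ (recall $\eps<r_0/2$) yields $\|\omega_\eps(\cdot,t)\|_\infty\le 3\|\omega_\eps(\cdot,0)\|_\infty\le 3M\eps^{-2}|\log\eps|^{-1}$ for $t\in[0,T^0_\eps]$.

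The bound on $\mc R$ is then immediate: for $x,y\in\Lambda_\eps(t)$ with $t\in[0,T^0_\eps]$ one has $x_2 y_2\in[r_0^2/4,\,9r_0^2/4]$ and $x_2^2\ge r_0^2/4$, so the right-hand side of \eqref{sR} is bounded by a constant depending only on $r_0$; multiplying by $\omega_\eps(y,t)$ and integrating, the mass bound gives the second inequality in \eqref{bc_bound2}. For the bound on $L$, since $x_2\ge r_0/2$ it suffices by \eqref{bc_bound} to bound $I(x,t):=\int\!\rmd y\,\log\frac{1+|x-y|}{|x-y|}\,\omega_\eps(y,t)$ by a constant. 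I would split at $|x-y|=1$: on $\{|x-y|\ge 1\}$, $\log\frac{1+|x-y|}{|x-y|}\le\log 2$, contributing $\le(\log2)\|\omega_\eps(\cdot,t)\|_1\le C|\log\eps|^{-1}$; on $\{|x-y|<1\}$, using $\log\frac{1+s}{s}\le\log 2+\log\frac1s$, matters reduce to $J(x,t):=\int_{|x-y|<1}\log\frac1{|x-y|}\,\omega_\eps(y,t)\,\rmd y$. I then split $J$ again at radius $\rho:=(3\pi M)^{-1/2}\eps$ — chosen so that $\pi\rho^2\cdot 3M\eps^{-2}|\log\eps|^{-1}=|\log\eps|^{-1}$, with $\rho<1$ for $\eps$ small: on $\{|x-y|<\rho\}$ bound $\omega_\eps$ by its sup-norm and use $\int_{B_\rho(0)}\log\frac1{|z|}\,\rmd z=\pi\rho^2\log\frac1\rho+\frac{\pi\rho^2}2$; on $\{\rho\le|x-y|<1\}$ bound $\log\frac1{|x-y|}$ by $\log\frac1\rho$ and use the mass. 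Each piece is $\le\|\omega_\eps(\cdot,t)\|_1\log\frac1\rho+C|\log\eps|^{-1}=|\log\eps|^{-1}\big(|\log\eps|+O(1)\big)\le C$, hence $I(x,t)\le C$, and dividing by $4\pi x_2$ gives the first inequality in \eqref{bc_bound2}.

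The main (and only) obstacle is this last borderline-logarithmic estimate: the divergence $\log\frac1\rho\sim|\log\eps|$ is exactly absorbed by $\|\omega_\eps(\cdot,t)\|_1\sim|\log\eps|^{-1}$, which is precisely why the vorticity mass is normalized to order $|\log\eps|^{-1}$ — this is the classical estimate underlying the point-vortex limit. The factor $x_2^{-1}$ in $L$ and the remainder $\mc R$ are harmless because $x_2$ stays comparable to $r_0$ on the strip to which $\Lambda_\eps(t)$ is a priori confined for $t\le T^0_\eps$.
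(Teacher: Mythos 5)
Your proof is correct and follows essentially the same route as the paper: both rest on the sup-norm bound $\|\omega_\eps(\cdot,t)\|_\infty\le 3M\eps^{-2}|\log\eps|^{-1}$, the conserved mass $\|\omega_\eps(\cdot,t)\|_1=|\log\eps|^{-1}$, and the confinement $x_2\in[r_0/2,3r_0/2]$, with the logarithmic divergence $\log\frac1\rho\sim|\log\eps|$ cancelled by the mass normalization. The only cosmetic difference is that the paper packages the near-singularity estimate as a rearrangement (concentrating the mass at maximal density in a disk of radius $\bar\rho\sim\eps$), whereas you carry out the equivalent explicit splitting at the same radius; likewise your derivation of mass conservation from \eqref{weqF} with $f\equiv1$ is equivalent to the paper's Liouville-theorem argument.
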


\begin{proof}
From Eq.~\eqref{bc_bound} and \eqref{sR} it follows that
\begin{equation}
\label{lrest}
|L(x,y)| \le \frac{1}{2\pi r_0}\log\frac {1+|x-y|}{|x-y|}\,,\quad |\mc R(x,y)| \le C \quad \forall\, x,y\in \Lambda_\eps(t)\quad \forall\,  t\in [0,T^0_\eps]\,,
\end{equation}
while, from Eq.~\eqref{cons-omr_n}, \eqref{MgammaFb}, and the definition of $T^0_\eps$, 
\begin{equation}
\label{omega_t}
|\omega_\eps(x,t)| \le \frac{3M}{\eps^2|\log\eps|} \quad \forall\, t\in [0,T^0_\eps]\,.
\end{equation}
Since $\log\frac {1+|x-y|}{|x-y|}$ is monotonically unbounded as $y\to x$, the maximum of the function $\int\!\rmd y\, \log\frac {1+|x-y|}{|x-y|}\, \omega_\eps(y,t)$ is achieved when we rearrange the vorticity mass as close as possible to the singularity. Therefore, in view of Eq.~\eqref{omega_t},
\begin{equation}
\label{311b}
\begin{split}
	\int\!\rmd y\, |L(x,y)|\, \omega_\eps(y,t) & \le \frac{1}{2\pi r_0} \int\!\rmd y\, \log\frac {1+|x-y|}{|x-y|}\, \omega_\eps(y,t) \\ & \le \frac{3M}{\eps^2|\log\eps|r_0} \int_0^{\bar\rho}\!\rmd \rho\, \rho \, \log\frac{1+\rho}{\rho} \\ & =  \frac{3M}{\eps^2|\log\eps|r_0} \bigg\{\frac{\bar\rho^2}{2} \log\frac{1+\bar\rho}{\bar\rho} - \frac 12 \int_0^{\bar\rho}\!\rmd \rho\, \frac{\rho}{1+\rho} \bigg\},
\end{split}
\end{equation}
with $\bar\rho$ such that $3\pi\bar\rho^2 M/(\eps^2|\log\eps|)= 1/|\log\eps|$, from which the first estimate in Eq.~\eqref{bc_bound2} follows. Finally, we observe that, by Liouville's theorem and Eq.~\eqref{cons-omr_n}, since the vector field $\bs F^\eps$ in Assumption \ref{ass:1}-(a)  has zero divergence,
\begin{equation}
\label{w=1}
\int\! \rmd y\, \omega_\eps(y,t) =  \int\! \rmd\bs\xi\, \frac{\omega_\eps(\bs\xi,t)}{r} =  \int\! \rmd\bs\xi_0 \, \frac{\omega_\eps(\bs\xi_0,0)}{r_0} = \int\! \rmd y\, \omega_\eps(y,0)  =\frac{1}{|\log\eps|}\, ,
\end{equation}
where we have used  the coordinate transformation  $\bs\xi=\phi^t(\bs\xi_0)$, with $\phi^t$ the flow generated by $\dot{\bs\xi} = \bs u(\bs\xi,t) + \bs F^\eps(\bs\xi,t)$. Therefore, the second estimate in Eq.~\eqref{bc_bound2} is a consequence of the second one in Eq.~\eqref{lrest}.
\end{proof}

We denote by $B_\eps(t)=(B_{\eps,1}(t), B_{\eps,2}(t))$ the center of vorticity of the blob, defined by
\begin{equation}
\label{c.m.}
B_\eps(t) = \frac{\int\! \rmd x\, x\, \omega_\eps(x,t)}{\int\! \rmd x\, \omega_\eps(x,t)}
=|\log\eps|\int\! \rmd x\, x\, \omega_\eps(x,t) \,, 
\end{equation}
and by $I_\eps(t)$ the axial moment of inertia with respect to $x_2=B_{\eps,2}(t)$, i.e.,
\begin{equation}
\label{moment}
I_\eps(t) = \int\! \rmd x\, \left(x_2-B_{\eps,2}(t)\right)^2  \omega_\eps(x,t)\,.
\end{equation}
Since $\Lambda_\eps(t)$ is compact, the time derivatives of $B_{\eps,2}(t)$ (in this section, we are only interested in this component) and $I_\eps(t)$  can be computed by means of Eq.~\eqref{weqF}. To this end, we first observe that the time derivative of $M_2 := \int\!\rmd x\, \omega(x,t) x_2^2$ is $\dot M_2 = \int\! \rmd x \, \omega_\eps(x,t) \,2 x_2  F^\eps_2(x,t)$ (it is a conserved quantity in absence of external field, see Appendix \ref{app:a}), so that
\begin{equation}
\label{growth B}
\dot B_{\eps,2}(t) =  |\log\eps|\int\! \rmd x\,\omega_\eps(x,t)\, \left(F^\eps_2(x,t) +\int\!\rmd y\, \mc R_2(x,y)\, \omega_\eps(y,t)  \right),
\end{equation}
\begin{equation}
\begin{split}
\label{growth moment}
\dot I_\eps(t) & = 2 \int\! \rmd x\,\omega_\eps(x,t)\, (x_2 - B_{\eps,2}(t)) F^\eps_2(x,t)  \\ & \quad - 2 B_{\eps,2}(t) \int\!\rmd x\, \omega_\eps(x,t) \int\!\rmd y\, \mc R_2(x,y)\, \omega_\eps(y,t) \,,
\end{split}
\end{equation}
where we have used the expression Eq.~\eqref{decom_u} for $u(x,t)$, the identities 
\[
\int\! \rmd x\,\omega_\eps(x,t) \,(x_2-B_{\eps,2}(t)) = 0\,, \quad \int\!\rmd x\, \widetilde u(x,t)\,\omega_\eps(x,t) = 0\,,
\]
(which derive from the definition of center of vorticity and the explicit form of $K(x-y)$ in Eq.~\eqref{vel-vor3}), and the fact that $L_2(x,y)=0$, see Eq.~\eqref{bc_bound}.

\begin{lemma}
\label{lem:2}
The following estimate holds, 
\begin{equation}
\label{Iee}
I_\eps(t) \le \frac{C}{|\log\eps|^2} \qquad \forall\, t\in [0,T^0_\eps]\,.
\end{equation}
\end{lemma}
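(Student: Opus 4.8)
The plan is to estimate $\dot I_\eps(t)$ using Eq.~\eqref{growth moment} and then close a differential inequality via Gronwall. The first term on the right-hand side of Eq.~\eqref{growth moment} is controlled by the Lipschitz bound on $F^\eps$ from Assumption \ref{ass:1}-(b): writing $F^\eps_2(x,t) = F^\eps_2(x,t) - F^\eps_2(B_\eps(t),t) + F^\eps_2(B_\eps(t),t)$, the constant piece integrates against $\int \rmd x\, \omega_\eps(x,t)(x_2 - B_{\eps,2}(t)) = 0$ and drops out, while the remainder is bounded by $\frac{L}{|\log\eps|}|x - B_\eps(t)|$. Hence
\[
\left| 2\int\! \rmd x\,\omega_\eps(x,t)\,(x_2 - B_{\eps,2}(t)) F^\eps_2(x,t) \right| \le \frac{2L}{|\log\eps|} \int\! \rmd x\,\omega_\eps(x,t)\, |x_2 - B_{\eps,2}(t)|\,|x - B_\eps(t)|\,.
\]
Here I must be slightly careful: $|x - B_\eps(t)|$ involves both components, and a priori I only control $I_\eps(t)$, the second-component inertia. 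However, within the time interval $[0,T^0_\eps]$ one knows that $r_0/2 \le x_2 \le 3r_0/2$ on $\Lambda_\eps(t)$, and a crude a priori bound on the diameter of the support in the $x_1$-direction (of order $1$ on $[0,T]$, obtainable from the velocity bound $|u| \le C|\log\eps|^{-1}\cdot$(something)$+|F^\eps|$, or simply from the fact that the support has small total mass and the singular part $\widetilde u$ moves points slowly in an $L^1$-averaged sense) shows $|x_1 - B_{\eps,1}(t)| \le C$. Then $|x - B_\eps(t)| \le C$, so the first term is bounded by $\frac{C}{|\log\eps|}\int \rmd x\, \omega_\eps(x,t)|x_2 - B_{\eps,2}(t)| \le \frac{C}{|\log\eps|}\sqrt{I_\eps(t)\cdot \|\omega_\eps(\cdot,t)\|_{L^1}} = \frac{C}{|\log\eps|^{3/2}}\sqrt{I_\eps(t)}$ by Cauchy–Schwarz and Eq.~\eqref{w=1}.

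For the second term in Eq.~\eqref{growth moment}, I use Lemma \ref{lem:1}: $\int \rmd y\, |\mc R_2(x,y)|\,\omega_\eps(y,t) \le \frac{C}{|\log\eps|}$, and $B_{\eps,2}(t)$ is bounded (it stays near $r_0$ on $[0,T^0_\eps]$), so this term is bounded in absolute value by $C\cdot \|\omega_\eps(\cdot,t)\|_{L^1}\cdot \frac{C}{|\log\eps|} = \frac{C}{|\log\eps|^2}$. Combining,
\[
\dot I_\eps(t) \le \frac{C}{|\log\eps|^{3/2}}\sqrt{I_\eps(t)} + \frac{C}{|\log\eps|^2}\,.
\]
Setting $J_\eps(t) = \sqrt{I_\eps(t) + |\log\eps|^{-4}}$, one gets $\dot J_\eps(t) \le \frac{C}{|\log\eps|^{3/2}} + \frac{C}{|\log\eps|^{2}J_\eps(t)} \le \frac{C}{|\log\eps|^{3/2}}$ (using $J_\eps \ge |\log\eps|^{-2}$), so $J_\eps(t) \le J_\eps(0) + \frac{CT}{|\log\eps|^{3/2}}$. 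Since $\Lambda_\eps(0) \subset \Sigma(\zeta^0|\eps)$ gives $I_\eps(0) \le \eps^2 \|\omega_\eps(\cdot,0)\|_{L^1} \le \eps^2/|\log\eps|$, hence $J_\eps(0) \le C|\log\eps|^{-2}$, we obtain $J_\eps(t) \le C|\log\eps|^{-3/2}$, i.e. $I_\eps(t) \le C|\log\eps|^{-3}$ — which is better than claimed. The stated bound $I_\eps(t) \le C|\log\eps|^{-2}$ then follows a fortiori; I suspect the authors content themselves with $|\log\eps|^{-2}$ because the Cauchy–Schwarz step is where the a priori diameter bound enters and they may not want to track the sharper exponent here, or they use a cruder bound $|x_2 - B_{\eps,2}(t)| \le C$ on the support giving $\int \rmd x\, \omega_\eps |x_2 - B_{\eps,2}| \le C/|\log\eps|$ directly and thus $\dot I_\eps \le C/|\log\eps|^2$, yielding exactly Eq.~\eqref{Iee}.

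\textbf{Main obstacle.} The delicate point is justifying that the support $\Lambda_\eps(t)$ stays within an $O(1)$ (or better) distance of $B_\eps(t)$ in the $x_1$-direction so that the Cauchy–Schwarz / $L^1$ estimates close — in other words, controlling the horizontal spreading before the sharp radial localization is available. On $[0,T^0_\eps]$ we only have the rough confinement $r_0/2 \le x_2 \le 3r_0/2$; the horizontal a priori control must come from the structure of the equations (the velocity field, decomposed as in Eq.~\eqref{decom_u}, has its only non-Lipschitz/large part $L(x,y)$ pointing along $x_1$, but with the bound of Lemma \ref{lem:1} its contribution to the motion of the bulk is still controllable). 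If instead one can only afford the trivial bound $\mathrm{diam}\,\Lambda_\eps(t) \le \mathrm{diam}\,\Lambda_\eps(0) + \int_0^t \mathrm{osc}\,u \le \eps + Ct$ using that $\|u\|_\infty$ restricted to differences is bounded on the relevant region, that already gives $|x_1 - B_{\eps,1}(t)| \le C$, which suffices. So concretely the step I expect to require the most care is the bookkeeping to extract a clean $O(1)$ bound on the horizontal diameter of the support valid on all of $[0,T^0_\eps]$, uniformly in $\eps$; once that is in hand the rest is the Gronwall argument sketched above.
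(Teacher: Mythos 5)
Your overall scheme (differentiate $I_\eps$, bound the two terms of Eq.~\eqref{growth moment}, integrate the resulting differential inequality) is the same as the paper's, and your treatment of the second term and of $I_\eps(0)$ is fine. But your bound on the first term has a genuine gap. By subtracting $F^\eps_2(B_\eps(t),t)$ and invoking the Lipschitz estimate in Eq.~\eqref{2Lipsc} you produce the factor $|x-B_\eps(t)|$, and you then need $|x-B_\eps(t)|\le C$ on $\Lambda_\eps(t)$, i.e.\ an $O(1)$ bound on the \emph{axial} spread of the support. This is not available at this stage and cannot be obtained the way you suggest: near the blob the field $\widetilde u$ is only bounded pointwise by $C\|\omega_\eps\|_\infty^{1/2}\|\omega_\eps\|_1^{1/2}\sim C\eps^{-1}|\log\eps|^{-1}$, so $\mathrm{osc}\,u$ is not $O(1)$ and the estimate $\mathrm{diam}\,\Lambda_\eps(t)\le \eps+Ct$ fails; controlling axial localization is precisely the hard part of the paper, and it is never achieved at the level of the support, only through the concentration statement of Lemma~\ref{lem:4}. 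The fix is to not subtract anything: since $F^\eps_2$ is already multiplied by $(x_2-B_{\eps,2}(t))$, the uniform bound $|F^\eps_2|\le C_F/|\log\eps|$ from Eq.~\eqref{2Lipsc} gives
\[
\Big|2\int\!\rmd x\,\omega_\eps(x,t)\,(x_2-B_{\eps,2}(t))\,F^\eps_2(x,t)\Big| \le \frac{C}{|\log\eps|}\int\!\rmd x\,\omega_\eps(x,t)\,|x_2-B_{\eps,2}(t)| \le \frac{C}{|\log\eps|^{3/2}}\sqrt{I_\eps(t)}
\]
by Cauchy--Schwarz and Eq.~\eqref{w=1}; no control on the $x_1$-spread is needed. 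This is exactly what the paper does.

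A secondary error: your integration of $\dot I_\eps\le C|\log\eps|^{-3/2}\sqrt{I_\eps}+C|\log\eps|^{-2}$ does not yield $I_\eps\le C|\log\eps|^{-3}$. With $J_\eps\ge|\log\eps|^{-2}$ you only get $C|\log\eps|^{-2}/J_\eps\le C$, not $C|\log\eps|^{-3/2}$, so that route gives nothing; and the inhomogeneous term alone already contributes $CT|\log\eps|^{-2}$ to $I_\eps$ after integration, so $C|\log\eps|^{-2}$ is the natural (and correct) outcome of this differential inequality, exactly as stated in Eq.~\eqref{Iee}.
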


\begin{proof}
By Eqs.~\eqref{2Lipsc},  \eqref{bc_bound2}, \eqref{growth B}, and \eqref{growth moment} we have that, for any $[0,T^0_\eps]$, $|\dot B_{\eps,2}(t)| \le C/|\log\eps|$ (hence $|B_{\eps,2}(t)| \le C$) and  
\[
|\dot I_\eps(t)| \le \frac{C}{|\log\eps|} \int\! \rmd x\,  |x_2-B_{\eps,2}(t)| \, \omega_\eps(x,t) + \frac{C}{|\log\eps|^2} \le \frac{C}{|\log\eps|^{3/2}}\sqrt{I_\eps(t)} + \frac{C}{|\log\eps|^2}\,,
\]
where in the last estimate we used the Cauchy-Schwarz inequality and  Eq.~\eqref{w=1}. Eq.~\eqref{Iee} now follows by integration of the last differential inequality since the initial data imply $I_\eps(0) \leq 4\eps^2$.
\end{proof}

\begin{lemma}
\label{lem:3}
Recall $\Lambda_\eps(t)=\supp\omega_\eps(\cdot,t)$ and define
\begin{equation}
\label{Rt}
R_t:= \max\{|x_2-B_{\eps,2}(t)|\colon x\in \Lambda_\eps(t)\}\,.
\end{equation}
Given $x_0\in\Lambda_\eps(0)$, let $x(x_0,t)$ be the solution to Eq.~\eqref{eqchar_nF} with initial condition $x(x_0,0) = x_0$ and suppose at time $t\in (0,T^0_\eps]$ it happens that  
\begin{equation}
\label{hstimv}
|x_2(x_0,t)-B_{\eps,2}(t)| = R_t\,.
\end{equation}
Then, at this time $t$, 
\begin{equation}
\label{stimv}
\frac{\rmd}{\rmd t} |x_2(x_0,t)- B_{\eps,2}(t)| \leq  \frac{C}{|\log\eps|}+\frac{1}{\pi R_t   |\log\eps|} + \sqrt{\frac{C m_t(R_t/2)}{ \eps^2 |\log\eps|}}\,,
\end{equation}
where the function $m_t(\cdot)$ is defined by 
\begin{equation}
\label{mt}
m_t(h) = \int_{|y_2-B_{\eps,2}(t)|>h}\!\rmd y\,\omega_\eps(y,t)\,.
\end{equation}
\end{lemma}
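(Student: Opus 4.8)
The plan is to bound the time derivative of $|x_2(x_0,t)-B_{\eps,2}(t)|$ by splitting the velocity field according to the decomposition in Eq.~\eqref{decom_u}, together with the external field $F^\eps$, and estimating each contribution separately at the time $t$ where the extremal identity \eqref{hstimv} holds. Writing $\frac{\rmd}{\rmd t}|x_2(x_0,t)-B_{\eps,2}(t)| = \sign(x_2(x_0,t)-B_{\eps,2}(t))\,(\dot x_2(x_0,t) - \dot B_{\eps,2}(t))$, I would substitute $\dot x_2 = u_2(x(x_0,t),t) + F^\eps_2(x(x_0,t),t)$ from Eq.~\eqref{eqchar_nF} and $\dot B_{\eps,2}$ from Eq.~\eqref{growth B}. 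The $F^\eps_2$ term and the $\mc R_2$ term are immediately $O(1/|\log\eps|)$ by Eq.~\eqref{2Lipsc} and Lemma~\ref{lem:1} (and the bound $|\log\eps|\int\omega_\eps = 1$). The term coming from $L(x,y)$ vanishes since $L_2 = 0$. Hence the only delicate contribution is $\widetilde u_2(x(x_0,t),t) = \int\!\rmd y\, K_2(x(x_0,t)-y)\,\omega_\eps(y,t)$, i.e.\ the planar Biot--Savart part.

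For the planar term, the standard trick is to split the $y$-integral into the region $|y - x(x_0,t)| < R_t/2$ (the ``near'' part) and its complement. In the far region, $|K(x-y)| = \frac{1}{2\pi|x-y|} \le \frac{1}{\pi R_t}$, so that contribution is bounded by $\frac{1}{\pi R_t}\int\!\rmd y\,\omega_\eps(y,t) = \frac{1}{\pi R_t|\log\eps|}$, which is exactly the second term on the right of \eqref{stimv}. For the near region one exploits the key geometric fact that, since we are at a point realizing the maximum $R_t$ of $|y_2 - B_{\eps,2}(t)|$ over $\Lambda_\eps(t)$, every $y$ with $|y-x(x_0,t)| < R_t/2$ satisfies $|y_2 - B_{\eps,2}(t)| > R_t/2$; thus the vorticity mass available in the near region is at most $m_t(R_t/2)$ as defined in \eqref{mt}. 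Combining this with the $L^\infty$ bound $\omega_\eps \le 3M/(\eps^2|\log\eps|)$ from \eqref{omega_t}, the near contribution is bounded, as in the classical vortex arguments, by optimizing the rearrangement: the worst case concentrates mass $m_t(R_t/2)$ into a disk of radius $\rho_*$ with $\pi\rho_*^2 \cdot 3M/(\eps^2|\log\eps|) = m_t(R_t/2)$, giving $\int_{|y-x|<\rho_*} |K_2(x-y)|\,\omega_\eps(y,t)\,\rmd y \le \frac{3M}{\eps^2|\log\eps|}\int_0^{\rho_*}\!\rmd\rho \le \frac{3M\rho_*}{\eps^2|\log\eps|} = \sqrt{\tfrac{3M\, m_t(R_t/2)}{\pi\eps^2|\log\eps|}}$, which is the third term in \eqref{stimv} up to renaming the constant $C$.

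The main obstacle — really the only non-routine point — is justifying the ``rearrangement'' bound for the near-field singular integral rigorously, i.e.\ that among all densities bounded by $3M/(\eps^2|\log\eps|)$ with total near-field mass $\le m_t(R_t/2)$, the integral $\int |K_2(x-y)|\,\omega(y)\,\rmd y$ is maximized by packing the mass into the smallest possible disk centered at $x$; this is a standard bathtub-type rearrangement inequality for the decreasing kernel $|y|^{-1}$, but it must be invoked cleanly. A secondary technical point is the differentiability of $t\mapsto |x_2(x_0,t)-B_{\eps,2}(t)|$ at the extremal time and the sign bookkeeping (one uses that at an interior maximum of the absolute value the quantity is nonzero, $R_t>0$, so the absolute value is smooth there). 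Everything else — the $F^\eps$ and $\mc R$ estimates, the far-field bound — follows directly from Assumption~\ref{ass:1}, Lemma~\ref{lem:1}, and Eqs.~\eqref{w=1}, \eqref{omega_t}.
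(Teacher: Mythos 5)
Your proposal is correct and follows essentially the same route as the paper: isolate the Lipschitz/bounded contributions ($F^\eps$, $\mc R$, and the vanishing $L_2$) into an $O(1/|\log\eps|)$ term, then handle the planar kernel $K_2$ by a two-region split in which the far part is bounded by $\frac{1}{\pi R_t}$ times the total mass $1/|\log\eps|$ and the near part is bounded by the bathtub rearrangement of a mass at most $m_t(R_t/2)$ under the $L^\infty$ constraint \eqref{omega_t}. The only (immaterial) difference is that you split by $|y-x|\lessgtr R_t/2$ while the paper splits by $|y_2-B_{\eps,2}(t)|\lessgtr R_t/2$; the same triangle-inequality observation at the extremal point makes either choice work and yields identical bounds.
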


\begin{proof}
We observe that the proof is similar to that given in \cite[Lemma 2.5]{BuM1}. Letting $x=x(x_0,t)$, by Eqs.~\eqref{eqchar_nF},  \eqref{decom_u},   \eqref{w=1},
and \eqref{growth B} we have,
\begin{equation}
\label{distance1}
\begin{split}
& \frac{\rmd}{\rmd t} |x_2(x_0,t)- B_{\eps,2}(t)| = \big(u_2(x,t) + F^\eps_2(x,t) - \dot B_{\eps,2}(t)\big)  \frac{x_2-B_{\eps,2}(t)}{|x_2-B_{\eps,2}(t)|} \\ & \qquad\qquad = V(x,t) \frac{x_2-B_{\eps,2}(t)}{|x_2-B_{\eps,2}(t)|} + \int\!\rmd y\, K_2(x-y)\, \omega_\eps(y,t) \frac{x_2-B_{\eps,2}(t)}{|x_2-B_{\eps,2}(t)|}\,,
\end{split}
\end{equation}
with
\begin{equation}
\label{j}
\begin{split}
V(x,t) & = F^\eps_2(x,t)  +\int\!\rmd z\, \mc R_2(x,z)\, \omega_\eps(z,t) \\ & \quad - |\log\eps| \int\! \rmd y\,\omega_\eps(y,t)\, \left(F^\eps_2(y,t)+\int\!\rmd z\, \mc R_2(y,z)\, \omega_\eps(z,t) \right).
\end{split}
\end{equation}
From Eqs.~\eqref{2Lipsc}, \eqref{bc_bound2},  and  \eqref{w=1} we have
\begin{equation}
\label{distance4}
|V(x,t)| \le  \frac{C}{|\log\eps|} \quad \forall\, t\in [0,T^0_\eps]\,.
\end{equation}
For the last term in Eq.~\eqref{distance1}, we split the integration region into two parts, the set $A_1= \{y\in \Lambda_\eps(t) \colon |y_2 - B_{\eps,2}(t)|\le R_t/2\}$ and the set $A_2 = \{y\in \Lambda_\eps(t) \colon  R_t/2 < |y_2 - B_{\eps,2}(t)| \le R_t\}$. Then,
\begin{equation}
\label{in A_1,A_2}
\int\!\rmd y\, K_2(x-y)\, \omega_\eps(y,t) \frac{x_2-B_{\eps,2}(t)}{|x_2-B_{\eps,2}(t)|} = H_1 + H_2\,,
\end{equation}
where
\begin{equation}
\label{in A_1}
H_1 = \frac{x_2-B_{\eps,2}(t)}{|x_2-B_{\eps,2}(t)|}  \int_{A_1}\! \rmd y\, K_2(x-y)\, \omega_\eps(y,t) 
\end{equation}
and
\begin{equation}
\label{in A_2}
H_2 = \frac{x_2-B_{\eps,2}(t)}{|x_2-B_{\eps,2}(t)|}  \int_{A_2}\! \rmd y\, K_2(x-y)\, \omega_\eps(y,t)\,.
\end{equation}

We consider first the contribution due to the set $A_1$. Recalling Eq.~\eqref{vel-vor3}, after introducing the new variables $x'=x-B_\eps(t)$,  $y'=y-B_\eps(t)$, we get,
\begin{equation}
\label{in H_11}
|H_1| \leq \frac{1}{2\pi} \int_{|y_2'|\leq R_t/2}\! \rmd y'\, \frac{1}{|x'-y'|}\, \omega_\eps(y'+B_\eps(t))\,.
\end{equation}
From Eq.~\eqref{hstimv} we have $|x'_2| = R_t$, and hence $|y_2'| \le R_t/2$ implies $|x'-y'|\ge |x_2'-y_2'|\geq R_t/2$,
so that
\begin{equation}
|H_1| \leq \frac{1}{\pi \,R_t}  \int_{|y_2'|\leq R_t/2}\! \rmd y'\,  \omega_\eps(y'+B_\eps(t)) 
\leq \frac{1}{\pi R_t   |\log\eps|}\,.
\label{H_14}
\end{equation}

We bound now $H_2$. Again by Eq.~\eqref{vel-vor3},
\begin{equation*}
|H_2| \le \frac{1}{2\pi} \int_{A_2}\! \rmd y\, \frac 1{|x-y|} \, \omega_\eps(y,t)\,.
\end{equation*}
The function $|x-y|^{-1}$ diverges monotonically as $y\to x$, and so the maximum of the integral is obtained when we rearrange the vorticity mass as close as possible to the singularity. By Eq.~\eqref{omega_t} and since, by Eq.~\eqref{mt}, $m_t(R_t/2)$ is equal  to the total amount of vorticity in $A_2$, this rearrangement gives,
\begin{equation}
\label{h2}
|H_2| \le \frac{3M\eps^{-2}}{2\pi |\log\eps|} \int_{\Sigma (0|r)}\!\rmd y'\, \frac{1}{|y'|} = \frac{3M\eps^{-2}}{ |\log\eps|} r \,, 
\end{equation}
where the radius $r$ is such that $3\pi r^2 M/(\eps^2|\log\eps|) = m_t(R_t/2)$. The estimate Eq.~\eqref{stimv} now follows by Eqs.~\eqref{distance1}, \eqref{distance4}, \eqref{in A_1,A_2}, \eqref{H_14}, and \eqref{h2}.
\end{proof}

We determine now the behavior of the function $m_t (\cdot)$ introduced in Eq.~\eqref{mt}
when its argument goes to $0$.  The proof will be adapted from that of  \cite[Proposition 3.4]{BuM2}.

\begin{lemma} 
\label{lem:mt}
Let $m_t$ be defined as in Eq.~\eqref{mt}. For each $\ell>0$  and  $k \in \big(0, \frac 14\big)$,
\begin{equation}
\label{smt}
\lim_{\eps\to 0} \, \max_{ t \in [0, T^0_\eps]} \eps^{-\ell} m_t \left(\frac{1}{|\log\eps|^k} \right)  = 0\,.
\end{equation}
\end{lemma}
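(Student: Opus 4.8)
The plan is to control the amount of vorticity that escapes a thin radial strip around $B_{\eps,2}(t)$ by tracking, along the characteristic flow, how far a particle starting inside $\Lambda_\eps(0)$ can drift in the $x_2$-direction. Since $\diam \Lambda_\eps(0) \le 2\eps$, all particles start with $|x_2(x_0,0) - B_{\eps,2}(0)| \le 2\eps$, so $m_t(h)$ counts (the $\omega_\eps$-mass of) those particles whose radial distance from $B_{\eps,2}(t)$ has grown to exceed $h$ by time $t$. First I would fix $k \in (0,\tfrac14)$ and a large $\ell$, set $h_\eps := |\log\eps|^{-k}$, and introduce for each $n$ a decreasing sequence of scales, e.g. $h_\eps^{(n)} := \vartheta^n$ for a suitable small fixed $\vartheta$, or more flexibly a geometric ladder between $h_\eps$ and some $O(1)$ radius; the idea is an \emph{iterated} estimate in the spirit of \cite[Proposition 3.4]{BuM2}: bound $m_t(h_\eps^{(n)})$ in terms of $\max_s m_s(h_\eps^{(n-1)})$ with a gain at each step.

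The engine of the iteration is the single-particle estimate of Lemma~\ref{lem:3}. At a time $t$ realizing the maximal radial spread $R_t$, \eqref{stimv} gives
\[
\frac{\rmd}{\rmd t}|x_2(x_0,t)-B_{\eps,2}(t)| \le \frac{C}{|\log\eps|} + \frac{1}{\pi R_t |\log\eps|} + \sqrt{\frac{C\, m_t(R_t/2)}{\eps^2|\log\eps|}}\,.
\]
The first two terms are harmless once $R_t$ is bounded below by, say, $|\log\eps|^{-k}$ (they contribute at most $C|\log\eps|^{k-1}T \to 0$ over $[0,T^0_\eps]$, which is already $o(h_\eps)$ up to constants — this handles the ``outermost'' scale and shows $R_t \le C|\log\eps|^{-k}$ cannot be improved by these terms alone). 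The crucial point is the third term: if we already know $m_s(R_s/2) = m_s(h_\eps^{(n-1)}/2)$ is as small as $\eps^{\ell_{n-1}}$ uniformly in $s$, then $\sqrt{C m_t(R_t/2)/(\eps^2|\log\eps|)} \le C\eps^{(\ell_{n-1}-2)/2}$, which for $\ell_{n-1} > 2$ is superpolynomially small; integrating, a particle cannot move out to distance $h_\eps^{(n)}$ from one starting at distance $\le 2\eps$ unless $t$ is essentially of order $\eps^{-(\ell_{n-1}-2)/2}$, i.e. \emph{never} on $[0,T]$ for $\eps$ small. One must be careful that Lemma~\ref{lem:3} controls the \emph{maximal-spread} particle, so the bookkeeping is really: as long as $R_t \ge h_\eps^{(n)}$, the derivative of $R_t$ is governed by $m_t$ at the scale one step coarser; a barrier/continuity argument then forces $m_t(h_\eps^{(n)})$ down to the next level. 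I would set up the induction so that $\ell_0$ is whatever crude bound is available (even $\ell_0 = 0$, i.e. $m_t \le |\log\eps|^{-1}$, the total mass) and each step improves the exponent: schematically $m_t(h_\eps^{(n)}) \le \eps^{\ell_n}$ with $\ell_n \to \infty$, reaching $\ell_n \ge \ell$ after finitely many steps (the number of steps depending only on $\ell$ and the fixed geometric ratio, hence uniform in $\eps$).

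I expect the main obstacle to be making the iteration's base case and the passage between scales rigorous without circularity: Lemma~\ref{lem:3} only gives information at times where $R_t$ equals the running maximum, and $m_t$ at scale $n$ feeds back into the bound for scale $n-1$ only through the $\sqrt{m_t(R_t/2)}$ term, so one needs a clean monotone/continuity argument (a ``first exit time'' for $R_t$ past each threshold $h_\eps^{(n)}$) to close the loop, plus care that the additive $C/|\log\eps| + 1/(\pi R_t|\log\eps|)$ terms, integrated over the whole interval $[0,T^0_\eps]$, stay below the target scale $h_\eps^{(n)}$ at every level — this is why the statement restricts to $k < \tfrac14$ (the budget $|\log\eps|^{-1}$ per unit time against the spatial scale $|\log\eps|^{-k}$, together with the quadratic-in-$R_t$ loss hidden in rearranging the vorticity in $H_2$, forces $k < \tfrac14$). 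Once these bookkeeping points are handled, \eqref{smt} follows since after finitely many steps $m_t(h_\eps) \le \eps^\ell$ uniformly on $[0,T^0_\eps]$, i.e. $\eps^{-\ell} m_t(h_\eps) \to 0$.
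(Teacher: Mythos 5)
There is a genuine gap: the proposed induction is circular and has no valid base case. Lemma~\ref{lem:3} controls only the \emph{outermost} trajectory, i.e.\ the support radius $R_t$; the only information it can ever yield about $m_t$ is that $m_t(h)=0$ for $h\ge R_t$. To make its right-hand side useful you must already know that $m_t(R_t/2)\ll \eps^2$ (so that $\sqrt{Cm_t(R_t/2)/(\eps^2|\log\eps|)}$ is small), and $R_t/2$ lies strictly \emph{inside} the support, where trajectory arguments give you nothing about how the mass is distributed. Your proposed base case $m_t\le |\log\eps|^{-1}$ (total mass) makes the third term in Eq.~\eqref{stimv} of order $\eps^{-1}$, which after integration over $[0,T]$ confines nothing, so the bootstrap never starts. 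Moreover, even granting a successful step, its conclusion would be a bound on the support ($m_t(h^{(n)})=0$), not an improved polynomial bound $m_t(h^{(n)})\le\eps^{\ell_n}$, so there is no quantity that actually improves from one scale to the next. In the paper the logical order is the reverse of yours: Lemma~\ref{lem:mt} is proved \emph{first}, independently of Lemma~\ref{lem:3}, and only then are the two combined (in the proof of Eq.~\eqref{stimGa}) via the barrier argument you sketch.

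The missing ingredients are the axial moment of inertia bound $I_\eps(t)\le C/|\log\eps|^2$ (Lemma~\ref{lem:2}) and a direct evolution estimate for the mass distribution itself. The paper mollifies $m_t$ into $\mu_t(R,h)$ using a smooth radial cutoff $W_{R,h}$, differentiates it with the weak formulation Eq.~\eqref{weqF}, symmetrizes the singular kernel contribution using the antisymmetry of $K$, and controls the resulting terms by Chebyshev's inequality through $I_\eps(t)$; this yields $\frac{\rmd}{\rmd t}\mu_t(R,h)\le A_\eps(R,h)\,m_t(R)$ with $A_\eps\le C|\log\eps|^q$, $q<1$ (it is here, from the term $1/(h^2R^2|\log\eps|^2)\le|\log\eps|^{4k}$, that the restriction $k<\tfrac14$ actually arises, not from the mechanism you describe). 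The superpolynomial smallness then comes from iterating this Gronwall-type inequality $n=\lfloor|\log\eps|\rfloor$ times (a number of steps growing as $\eps\to0$, not fixed) across a ladder of radii between $|\log\eps|^{-k}$ and $\tfrac12|\log\eps|^{-k}$, using that $\mu_0(R_j,h)=0$ because the initial support has radius $\eps$: the factor $(C|\log\eps|^q t)^n/n!$ is, by Stirling, smaller than any power of $\eps$. A finite ladder of scales, as in your plan, could never produce this.
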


\begin{proof}
Given $R\ge 2h^\alpha$, $h>0$,  and  
\begin{equation}
\label{alpha_delta}
\alpha=\frac{1-k}{1+k}-\delta \, , \qquad \delta\in \left(0, \frac{1-2k}{1+k}\right) \, ,
\end{equation}
let $W_{R,h}(x_2)$, with $x_2$ the second component of $x = (x_1, x_2)$, be a non-negative smooth function, such that
\begin{equation}
\label{W1}
W_{R,h}(x_2) = \begin{cases} 1 & \text{if $|x_2|\le R$}, \\ 0 & \text{if $|x_2|\ge R+h$}, \end{cases}
\end{equation}
and its derivative $W_{R,h}'$ satisfies
\begin{equation}
\label{W2}
| W_{R,h}'(x_2)| < \frac{C}{h}\,,
\end{equation}
\begin{equation}
\label{W3}
|W_{R,h}'(x_2)-W_{R,h}'(y_2)| < \frac{C}{h^2}\,|x_2-y_2|\leq \frac{C}{h^2}\,|x - y|\,.
\end{equation}
We introduce the quantity
\begin{equation}
\label{mass 1}
\mu_t(R,h) = \int\! \rmd x \, \big[1-W_{R,h}(x_2-B_{\eps,2}(t))\big]\, \omega_\varepsilon (x,t)\,,
\end{equation}
which is a mollified version of $m_t$ satisfying
\begin{equation}
\label{2mass 3}
\mu_t(R,h) \le m_t(R) \le \mu_t(R-h,h)\,.
\end{equation}
Hence it is sufficient to prove \eqref{smt} with $\mu_t$ in place  of $m_t$. Since  the function $t\mapsto \mu_t(R,h)$ is differentiable, we can compute its time derivative, by  Eq.~\eqref{weqF} with test function $f(x,t) =1- W_{R,h}(x_2-B_{\eps,2}(t))$ and then using Eqs.~\eqref{decom_u} and \eqref{growth B}. We have,
\begin{equation}
\label{mu_t}
\begin{aligned} \frac{\mathrm{d}}{\mathrm{d}t} \mu_t(R,h) = & -
	\int\! \rmd x \, \nabla W_{R,h}(x_2-B_{\varepsilon,2}(t)) \cdot [ u(x,t)+F^\eps(x,t)-\dot{B}_\eps(t) ] \, \omega_\varepsilon(x,t) \\ =  & -
	\int \rmd x \:  W_{R,h}'(x_2-B_{\varepsilon,2}(t))   [u_2(x,t)+F^\eps_2(x,t)-\dot{B}_{\eps,2}(t) ] \, \omega_\varepsilon(x,t) \\ = & -H_3- H_4 \,,
\end{aligned}
\end{equation}
with
\begin{equation*}
\begin{split}
H_3 & = \int\! \rmd x\,   W_{R,h}'(x_2-B_{\eps,2}(t))  \int\!\rmd y \, K_2(x-y)\, \omega_\eps(y,t)\, \omega_\eps(x,t) \\ & = \frac 12 \int\! \rmd x \! \int\! \rmd y\, \omega_\eps(x,t)\,  \omega_\eps(y,t) \\ & \quad \times \left[ W_{R,h}'(x_2-B_{\eps,2}(t)) -   W_{R,h}'(y_2-B_{\eps,2}(t))\right]   K_2(x-y) \,, \\ H_4 & = |\log\eps|\int\! \rmd x\, W_{R,h}'(x_2-B_{\eps,2}(t))\, \omega_\eps(x,t) V(x,t)\,,
\end{split}
\end{equation*}
where the antisymmetry of $K$ has  allowed to achieve the second expression of $H_3$, and $V(x,t)$ is defined in Eq.~\eqref{j}. We can note that, in view of Eq.~\eqref{distance4}, \eqref{W2}, and the fact that $W_{R,h}'(z)$ is zero if $|z|\leq R$,
\begin{equation}
\label{acca4}
H_4 \le \frac{C}{h |\log\eps|} m_t(R)  \quad \forall\, t\in [0,T^0_\eps]\,.
\end{equation}
Now we treat $H_3$. We introduce the new variables $x'=x-B_\eps(t)$, $y'=y-B_\eps(t)$ (as done previously), define $\widetilde\omega_\eps(z,t) := \omega_\eps(z+B_\eps(t),t)$, and 
\[
f(x',y') := \frac 12 \widetilde\omega_\eps(x',t)\, \widetilde\omega_\eps(y',t) \, [W_{R,h}'(x_2')-  W_{R,h}'(y_2')]  K_2(x'-y') \,,
\]
whence  $H_3 = \int\! \rmd x' \! \int\! \rmd y'\, f(x',y')$. We note that $f(x',y')$ is a symmetric function of $x'$ and $y'$ and that, by Eq.~\eqref{W1}, in order to be different from zero it is necessary that either $|x_2'|\ge R$ or $|y_2'|\ge R$. Therefore,
\[
\begin{split}
H_3  &= \bigg[ \int_{|x_2'| > R}\! \rmd x' \! \int\! \rmd y' + \int\! \rmd x' \! \int_{|y_2'| > R}\! \rmd y' -  \int_{|x_2'| > h}\! \rmd x' \! \int_{|y_2'| > R}\! \rmd y'\bigg]f(x',y') \\ & = 2 \int_{|x_2'| > R}\! \rmd x' \! \int\! \rmd y_2'\,f(x',y')  -  \int_{|x_2'| > R}\! \rmd x' \! \int_{|y_2'| > R}\! \rmd y'\,f(x',y') \\ & = H_3' + H_3'' + H_3'''\,,
\end{split}
\]
with 
\[
\begin{split}
H_3' & = 2 \int_{|x_2'| > R}\! \rmd x' \! \int_{|y_2'| \le R-h^\alpha}\! \rmd y'\,f(x',y') \,, \\ H_3''&  = 2 \int_{|x_2'| > R}\! \rmd x' \! \int_{|y_2'| > R-h^\alpha}\! \rmd y'\,f(x',y')\,, \\ H_3''' & = -  \int_{|x_2'| > R}\! \rmd x' \! \int_{|y_2'| > R}\! \rmd y'\,f(x',y')\,.
\end{split}
\]
By the properties of $W_{R,h}$, we have  $W_{R,h}'(y_2') =0$ for $|y_2'| \le R$. In particular,  $W_{R,h}'(y_2') = 0$ for $|y_2'| \le R-h^\alpha$, then
\[
H_3' =  \int_{|x_2'| > R}\! \rmd x' \, \widetilde\omega_\eps(x',t)  W_{R,h}'(x_2')  \int_{|y_2'| \le R-h^\alpha}\! \rmd y'\, K_2(x'-y') \, \widetilde\omega_\eps(y',t)
\]
and therefore, in view of Eq.~\eqref{W2},
\begin{equation}
\label{a1'}
|H_3'| \le \frac{C}{h} m_t(R) \sup_{|x_2'| > R} |A_3(x',t)|\,,
\end{equation}
with
\[
A_3(x',t) =  \int_{|y_2'| \le R-h^\alpha}\! \rmd y'\, K_2(x'-y') \, \widetilde\omega_\eps(y',t) \,.
\]
We note that if $|x_2'| > R$ then $|y_2'| \le R-h^\alpha$ implies $|x'-y'|\ge |x_2'-y_2'|\ge h^\alpha$, hence
\[
\begin{split}
|A_3(x',t)| & \le \frac{1}{2\pi} \int_{|y_2'|\leq R-h^\alpha}\! \rmd y'\, \frac{\widetilde\omega_\eps(y',t) }{|x'-y'|} \\ & \le \frac{1}{2  \pi  h^\alpha} \int_{|y_2'|\le  R-h^\alpha} \! \rmd y'\, \widetilde\omega_\eps(y',t) \le \frac{1}{2\pi h^\alpha |\log\eps|}\,.
\end{split}
\]
We then obtain, by Eq.~\eqref{a1'},
\begin{equation}
\label{H_14b}
|H_3'| \le \frac{C}{h^{1+\alpha} |\log\eps|} m_t(R)\,.
\end{equation}
From Eq.~\eqref{W3}, using Chebyshev's inequality and $R\geq 2 h^\alpha$,
\[
|H_3''| + |H_3'''| \le \frac{C}{h^2} \int_{|x_2'| \ge R}\! \rmd x' \! \int_{|y_2'| \ge R-h^\alpha}\! \rmd y'\,\widetilde\omega_\eps(y',t) \,  \widetilde\omega_\eps(x',t)  \le \frac{C I_\eps(t)}{ h^2 R^2}m_t(R)  \,.
\]
Finally, by Eq.~\eqref{Iee},
\begin{equation}
\label{a1s}
|H_3| \le  C \left( \frac{1}{h^{1+\alpha}|\log\eps|} + \frac{1}{h^2 R^2 |\log\eps|^2} \right) m_t(R)  \quad \forall\, t\in [0,T^0_\eps]\,.
\end{equation}

From estimates Eqs.~\eqref{a1s}  and \eqref{acca4}, recalling Eq.~\eqref{mu_t},  we get,
\begin{equation}
\label{equ_mm}
\frac{\mathrm{d}}{\mathrm{d}t} \mu_t (R,h) \leq A_\varepsilon(R, h) m_t(R) \quad \forall\, t\in [0,T^0_\eps]\,.
\end{equation}
where
\[
A_\varepsilon(R, h) = C \left(\frac{1}{h^{1+\alpha} |\log\eps|} + \frac{1}{h^2 R^2 |\log\eps|^2} +\frac{1}{h |\log\eps|}\right).
\]
Therefore, by Eqs.~\eqref{2mass 3} and \eqref{equ_mm},
\begin{equation}
\mu_t (R,h) \le \mu_0 (R,h) + A_\varepsilon(R, h) \int_0^t {\mathrm{d}} s  \, \mu_s (R-h, h) \quad \forall\, t\in [0,T^0_\eps]\,.
\end{equation}
We assume now $\eps$ sufficiently small, and we iterate the last inequality $n=\lfloor|\log\varepsilon|\rfloor$  times (denoting with $\lfloor a\rfloor$ the integer part of $a>0$), from
\[
R_0 =\frac{1}{|\log\varepsilon|^{k}}  \quad\text{to}\quad R_n=\frac{1}{2|\log\varepsilon|^{k}}\,,
\]
where $R_n = R_0 -n h$,  and consequently
\[
h= \frac{1}{2  n |\log\varepsilon|^{k}} \,.
\]
This procedure is correct because in this range for $R$ the assumption $R\ge 2 h^\alpha$ (under which Eq.~\eqref{equ_mm} has been deduced) is satisfied. Indeed, in view of Eq.~\eqref{alpha_delta} we have
\[
h^\alpha \approx C \left(  \frac{1}{|\log\eps|^{1+k}} \right)^\alpha = C \left(  \frac{1}{|\log\eps|^{1+k}} \right)^{\frac{1-k}{1+k}-\delta} = \frac{C}{|\log\eps|^{1-k-(1+k)\delta}}\,,
\]
with $k<1-k-(1+k)\delta$, and therefore, if $\eps$ is small enough, $h^\alpha \ll R_n$. Moreover, the quantity $A_\varepsilon(R,h)$ is bounded by $C |\log\varepsilon |^q$ with $q<1$, in fact
\[
\begin{split}
\frac{1}{h^{1+\alpha} |\log\eps|} & \le C \frac{\left( |\log\eps|^{k+1}   \right)^{1+\alpha}}{|\log\varepsilon|} \le C |\log\varepsilon |^{1-\delta(k+1)} \,, \\ \frac{1}{h^2 R^2 |\log\eps|^2} & \le C \frac{|\log\eps|^{4k+2} }{|\log\eps|^2} \leq |\log\eps|^{4k}\,, \\ \frac{1}{h |\log\eps|} & \le C |\log\eps|^{k} \,.
\end{split}
\]
In conclusion,
\[
\begin{split}
\mu_t(R_0-h,h) & \le \mu_0(R_0-h,h) + \sum_{j=1}^{n-1} \mu_0(R_j,h) \frac{(C |\log\varepsilon|^q t)^j}{j!} \\ & \quad + \frac{(C |\log\varepsilon|^q )^{n}}{(n-1)!} \int_0^t\!{\textnormal{d}} s\,  (t-s)^{n-1}\mu_s(R_{n},h) \quad \forall\, t\in [0,T^0_\eps]\,.
\end{split}
\]
Since $\Lambda_\varepsilon(0) \subset \Sigma(z|\varepsilon)$, we can determine $\varepsilon$
small enough so that $\mu_0(R_j,h)=0$ for any $j=0,\ldots,n$, hence, for any $t\in [0, T]$,
\begin{equation}
\label{mass 15'}
\mu_t(R_0-h,h) \le \frac{(C |\log\varepsilon|^q)^{n}}{(n-1)!} \int_0^t\!{\textnormal{d}} s\,  (t-s)^{n-1}\mu_s(R_{n},h) \le  \frac{(C |\log\varepsilon|^q  t)^{n}}{n!}\,,
\end{equation}
where in the last inequality we have used the trivial bound $\mu_s(R_{n},h) \le 1$. 
Therefore, using also Eq.~\eqref{2mass 3},  Stirling formula, and $n=\lfloor|\log\varepsilon|\rfloor$,
\[
m_t(R_0) \le \mu_t(R_0 -h,h) \le  \frac{C}{|\log\eps|^{(1-q)|\log\eps|}} \quad \forall\, t\in [0,T^0_\eps]\,,
\]
which implies Eq.~\eqref{smt}.
\end{proof}

\begin{remark}
\label{rem:rc}
In \cite[Prop.~3.4]{BuM2} a similar concentration result is deduced for the vorticity mass outside a small disk, but only for small times. This is due to the non Lipschitz term Eq.~\eqref{bc_bound} (not present in our case), which leads to an estimate like Eq.~\eqref{mass 15'} but with $q=1$. We also remark that a weaker estimate $C/|\log\eps|^\theta$ with $\theta\in (1,2)$ for the axial moment of inertia instead of Eq.~\eqref{Iee} would lead as well to  Eq.~\eqref{mass 15'} (choosing $k\in(0,\frac{\theta-1}4)$ in this case).
\end{remark}

\begin{proof}[Proof of Eq.~\eqref{stimGa}]
In view of Eqs.~\eqref{2Lipsc}, \eqref{bc_bound2}, and \eqref{w=1}, from \eqref{growth B}  and since $|B_{\eps,2}(0)-r_0|\le \eps$ we have,
\begin{equation}
\label{compact2}
|B_{\eps,2}(t) - r_0| \le \frac{C}{|\log\eps|} \quad \forall\, t\in [0,T^0_\eps]\,.
\end{equation}
Therefore, it is sufficient to show that, given $k\in \big(0,\frac 14\big)$,
\begin{equation}
\label{lamb}
|x_2 - B_{\eps,2}(t)| \le \frac{C}{|\log\eps|^k} \quad \forall\, x\in \Lambda_\eps(t) \quad \forall\, t\in [0,T^0_\eps]\,,
\end{equation}
provided $\eps$ is small enough. To this end, we first notice that, in view of Lemma \ref{lem:3} and Eq.~\eqref{Rt}, for any $x_0\in \Lambda_\eps(0)$ and $t\in [0,T^0_\eps]$ we have $|x_2(x_0,t) -B_{\eps,2}(t)| \le R_t$, and whenever $|x_2(x_0,t)- B_{\eps,2}(t)|=R_t$ the differential inequality Eq.~\eqref{stimv} holds true. We claim that this implies
\begin{equation} 
\label{suppRt} 
\Lambda_{\eps}(t)  \subset \{x\in \mathbb{R}^2\colon |x_2-B_{\eps,2}(t)|< \rho(t) \} \quad \forall\, t\in [s_0,s_1] \quad \forall\, [s_0,s_1]  \subseteq [0,T^0_\eps]\,,
\end{equation}
provided $\rho(t)$ solves
\begin{equation} 
\label{eqdiffRt} 
\dot{\rho}(t) = \frac{2C}{|\log\eps|}+\frac{2}{\pi |\log\eps|\rho(t)} + g(t)\,,
\end{equation}
with initial datum $\rho(s_0) > R_{s_0}$ and $g(t)$ any smooth function which is an upper bound for the last term in Eq.~\eqref{stimv}. Indeed, $|x_2-B_{\eps,2}(s_0)| < \rho(s_0)$ for any $x\in \Lambda_\eps(s_0)$ and, by absurd, if there were a first time $t_*\in (s_0,s_1]$ such that $|x_2(x_0,t_*)- B_{\eps,2}(t_*)| = \rho(t_*)$ for some $x_0\in \Lambda_\eps(0)$, it would be necessarily $\rho(t_*) = R_{t_*}$; hence, by Eq.~\eqref{stimv}, $\dot \rho (t_*)$ would be strictly larger than $\frac{\rmd}{\rmd t}|x_2(x_0,t)- B_{\eps,2}(t)|\big|_{t=t_*}$, which contradicts the characterization of $t_*$ as the first time at which the graph of $t\mapsto |x_2(x_0,t)- B_{\eps,2}(t)|$ crosses the one of $t\mapsto \rho(t)$.

Now, let
\[
t_0 = \sup\{t\in [0,T^0_\eps] \colon R_s < 3|\log\eps|^{-k} \;\; \forall\, s\in [0,t]\}\,.
\]
If $t_0 = T^0_\eps$ then Eq.~\eqref{lamb} is already achieved, otherwise we set
\[
t_1 = \sup\{t\in [t_0,T^0_\eps] \colon R_s > 2|\log\eps|^{-k} \;\; \forall\, s\in [t_0,t_1]\}
\]
and consider $\rho(t)$ as in Eq.~\eqref{eqdiffRt}, relative to the interval $[s_0,s_1] = [t_0,t_1]$ and such that
\[
\rho(t_0) = 4|\log\eps|^{-k}\,, \quad g(t) \le \frac{C\eps^{(\ell -2)/2}}{|\log\eps|^{1/2}} \qquad \forall\, t\in [t_0,t_1]\,,
\]
for a fixed $\ell >2$. We note that $\rho(t_0) > R_{t_0}=3|\log\eps|^{-k}$ and that, since $R_t\ge 2|\log\eps|^{-k}$ for any $t\in [t_0,t_1]$, Eq.~\eqref{smt} guarantees that above condition on $g(t)$ is compatible with the requirement that the latter is an upper bound for the last term in Eq.~\eqref{stimv}.

Now, as $\rho(t) \ge R_t \ge 2|\log\eps|^{-k}$ for any $t\in [t_0,t_1]$, the second term in the right-hand side of Eq.~\eqref{eqdiffRt} is bounded by $C|\log\eps|^{k-1}$, and therefore, since $k\in \big(0,\frac 14 \big)$, from Eq.~\eqref{eqdiffRt} we deduce that
\[
\dot \rho(t) \leq \frac{C}{|\log\eps|^{3/4}} \qquad \forall\, t\in [t_0,t_1]\,,
\]
which integrated from $t_0$ and $t_1$ gives
\[
\rho(t) \le \rho(t_0) + \frac{CT }{|\log\eps|^{3/4}} \le \frac{C}{|\log\eps|^k} \qquad \forall\, t\in [t_0,t_1]\,.
\]
Clearly, if $t_1=T^0_\eps$ we are done. Otherwise, we can repeat the same argument in the intervals $[t_0',t_1'] \subseteq [t_1,T^0_\eps]$ defined analogously to $[t_0,t_1]$ (if any). Eq.~\eqref{lamb} is thus proved.
\end{proof}

\section{The reduced system: analysis of the axial motion}
\label{sec:4}

In this section we prove item (2) of Theorem \ref{thm:2}, remarking that, as in the previous section, we always assume $a=1$. We premise a concentration result which shows that  large part of the vorticity remains confined in a disk whose size is infinitesimal as $\eps\to 0$.

\begin{lemma}
\label{lem:4}
Consider the reduced system defined by Eqs.~\eqref{u=}, \eqref{cons-omr_n}, and \eqref{eqchar_nF}. Under Assumption \ref{ass:1}, with Eq.~\eqref{MgammaFb} in place of Eq.~\eqref{MgammaF}, for each $T>0$ there are $\eps_1\in (0,1)$, $C_1>0$ and $q_\eps(t)\in \bb R^2$, such that
\begin{equation}
\label{lem1b}
|\log\eps| \int_{\Sigma(q_\eps(t), \eps|\log\eps|)}\!\rmd x\, \omega_\eps(x,t) \ge 1 - \frac{C_1}{\log|\log\eps|} \qquad \forall\, t\in [0,T] \quad  \forall\, \eps\in (0,\eps_1]\,.
\end{equation}
\end{lemma}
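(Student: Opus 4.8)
The plan is to adapt the concentration argument from \cite{BCM00} for a single vortex, using the sharp radial localization (Theorem~\ref{thm:2}(1), already proved in Section~\ref{sec:3}) to control the non-Lipschitz part of the velocity field. Introduce the energy-type functional $W_\eps(t) := \int\!\rmd x\int\!\rmd y\, G(x-y)\,\omega_\eps(x,t)\,\omega_\eps(y,t)$ associated to the planar kernel $K = \nabla^\perp G$, which is the dominant part of $H$ by the splitting Eq.~\eqref{sH}. The idea is that $W_\eps$ is an approximate constant of motion: differentiating via the weak formulation Eq.~\eqref{weqF} and using the antisymmetry of $K$, the $\widetilde u$ self-interaction term drops out, and what remains is controlled by the external field $F^\eps$ (small by Eq.~\eqref{2Lipsc}) together with the $L$ and $\mc R$ contributions. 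Here the key point, and the reason this works globally in time, is that on $\Lambda_\eps(t)$ the radial width is $O(|\log\eps|^{-k})$ by Theorem~\ref{thm:2}(1), so both the logarithmically singular term $L$ (which lives only in the $x_1$-direction) and the remainder $\mc R$ give contributions to $\dot W_\eps$ that are $O(|\log\eps|^{-2}\cdot(\text{something mild}))$, whence $|W_\eps(t) - W_\eps(0)| \le C/(|\log\eps|^2\log|\log\eps|)$ or similar over $[0,T]$.

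Next I would estimate $W_\eps(0)$ from the initial concentration: since $\Lambda_\eps(0)\subset\Sigma(\zeta^0|\eps)$ and $|\omega_\eps|\le M\eps^{-2}|\log\eps|^{-1}$ with total mass $|\log\eps|^{-1}$, a direct computation gives $W_\eps(0) \ge \frac{1}{2\pi|\log\eps|^2}\log\frac{1}{\eps} - \frac{C}{|\log\eps|^2} = \frac{1}{2\pi|\log\eps|} - \frac{C}{|\log\eps|^2}$ (up to the sign/normalization conventions, using $-\log|x| \ge -\log(2\eps)$ on the blob). Combined with the previous step, $W_\eps(t) \ge \frac{1}{2\pi|\log\eps|} - \frac{C}{|\log\eps|^2\log|\log\eps|}$ for all $t\in[0,T]$. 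The final step is the standard ``concentration from energy'' argument: suppose for contradiction that for some $t$ a fraction of the mass bounded below by a constant lies outside $\Sigma(q_\eps(t)|\eps|\log\eps|)$ for every choice of center $q_\eps(t)$; then, splitting $W_\eps(t)$ according to whether the pair $(x,y)$ is close or far, the far pairs contribute at most $\frac{-\log(\eps|\log\eps|)}{2\pi} \cdot (\text{mass})^2$-type terms that are strictly smaller than the lower bound just obtained, by an amount $\gtrsim \frac{\log\log|\log\eps|}{|\log\eps|^2}$ — wait, more precisely, one shows $W_\eps(t) \le \frac{1}{2\pi|\log\eps|^2}\log\frac{1}{\eps|\log\eps|} + (\text{correction}) = \frac{1}{2\pi|\log\eps|} - \frac{\log|\log\eps|}{2\pi|\log\eps|^2} + \dots$, contradicting the lower bound once $\eps$ is small. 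Choosing the center $q_\eps(t)$ to be (say) a point of maximal vorticity density, or the center of vorticity $B_\eps(t)$, makes this quantitative, yielding the stated bound with the $1/\log|\log\eps|$ loss coming from the $\eps|\log\eps|$ radius rather than $\eps$.

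I would carry this out in the order: (i) define $W_\eps$, compute $\dot W_\eps$ and bound it using Theorem~\ref{thm:2}(1), Lemma~\ref{lem:1}-type estimates and Eq.~\eqref{2Lipsc}, to get near-conservation on $[0,T]$; (ii) lower bound $W_\eps(0)$; (iii) run the dichotomy argument producing $q_\eps(t)$ and the lower bound on the mass in $\Sigma(q_\eps(t)|\eps|\log\eps|)$. The main obstacle is step (i): controlling $\dot W_\eps$ requires that the logarithmic singularity of $G$ at coincident points does not blow up when differentiated against the flow, which is handled by the antisymmetrization $\frac12\int\!\int [\nabla G(x-y) - \nabla G(y-x)]\cdot(\dots)$ — but here, unlike in the pure planar case, the velocity difference $u(x)-u(y)$ involves the non-Lipschitz $L$-term, and one must use precisely that $L$ points along $e_1$ together with the $O(|\log\eps|^{-k})$ radial confinement (so the relevant coordinate difference $x_2 - y_2$ is tiny) to close the estimate. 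Getting the powers of $|\log\eps|$ to line up — ensuring the error in $\dot W_\eps$ integrated over $[0,T]$ is genuinely $o(|\log\eps|^{-2})$ relative to what the dichotomy argument needs, i.e.\ beating $\log|\log\eps|/|\log\eps|^2$ — is the delicate bookkeeping, and is exactly what forces the slightly weaker radius $\eps|\log\eps|$ and the $1/\log|\log\eps|$ error in Eq.~\eqref{lem1b} rather than something sharper.
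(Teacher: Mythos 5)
Your step (i) has a gap that I do not think can be repaired along the lines you indicate, and it is precisely the point where the paper's proof (Appendix \ref{app:a}) chooses a different object. If you differentiate $W_\eps(t)=\int\!\rmd x\!\int\!\rmd y\, G(x-y)\,\omega_\eps(x,t)\,\omega_\eps(y,t)$ and symmetrize, you get
\[
\dot W_\eps=\int\!\rmd x\!\int\!\rmd y\,\omega_\eps(x,t)\,\omega_\eps(y,t)\,\nabla G(x-y)\cdot\big[v(x,t)-v(y,t)\big]\,,\qquad v=u+F^\eps\,.
\]
The $\widetilde u$ part cancels by the usual threefold symmetrization and the $F^\eps$ part is harmless by \eqref{2Lipsc}, but the self-interaction through $L$ and $\mc R$ does not cancel: the fields $\int\!\rmd y\,L(x,y)\,\omega_\eps(y,t)$ and $\int\!\rmd y\,\mc R(x,y)\,\omega_\eps(y,t)$ are only \emph{bounded} (Lemma \ref{lem:1}), not Lipschitz with constant $O(|\log\eps|^{-1})$, so the best pointwise bound on the integrand is $|\nabla G(x-y)|\cdot O(1)\sim |x-y|^{-1}$, and the rearrangement estimate then gives $|\dot W_\eps|\le C\,\eps^{-1}|\log\eps|^{-2}$ --- enormously larger than $W_\eps\sim|\log\eps|^{-1}$ itself, let alone the precision $O(|\log\eps|^{-2})$ that your dichotomy in step (iii) requires. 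Your proposed rescue --- that $L$ points along $e_1$, so the radial confinement of Theorem \ref{thm:2}(1) makes the relevant coordinate difference small --- does not work: an $e_1$-directed field pairs with $\partial_{1}G(x-y)\propto (x_1-y_1)/|x-y|^2$, which is governed by the \emph{axial} separation, for which no sharp localization is available (that only the radial localization is sharp is the whole point of the paper). The cancellation you have in mind ($L_2=0$ killing the term outright) is exactly what controls the \emph{axial} moment of inertia $I_\eps$ in Lemma \ref{lem:2}, but it does not occur for $W_\eps$.

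The paper avoids this entirely by working with the true kinetic energy $E=\pi\int\!\rmd x\!\int\!\rmd y\,S(x,y)\,\omega_\eps(x,t)\,\omega_\eps(y,t)$, with $S$ the axisymmetric Green function: since $u=x_2^{-1}\nabla^\perp\Psi$, one has $u\cdot\nabla\Psi=0$ \emph{exactly}, so the whole self-interaction (including the parts that generate $L$ and $\mc R$ in the splitting \eqref{sH}) drops out of $\dot E$, leaving only $2\pi\int\!\rmd x\,\omega_\eps\,F^\eps\cdot\nabla\Psi$. That term is bounded by $C/|\log\eps|^2$ using \eqref{2Lipsc} together with the observation that $\div(x_2F^\eps)=0$ forces $F^\eps_2((x_1,0),t)=0$, which tames the logarithmically singular part of $\nabla_xS$. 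With $|\dot E|\le C/|\log\eps|^2$ and the (near-)conservation of $M_0$ and $M_2$, the concentration argument of \cite{BCM00} applies verbatim, with no input from Section \ref{sec:3}. If you wish to keep your outline, replace $W_\eps$ by $E$ in step (i); your steps (ii) and (iii) then go through essentially as you describe, as they do in \cite{BCM00}.
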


This is the content of \cite[Lemma 3.1]{BuM2} and it is an extension of the analogous result in \cite{BCM00}, where the case without external field is considered. However, since the demonstration given in \cite{BuM2} is affected by an error, we provide here the correct proof, see Appendix \ref{app:a}.

The following proposition shows that in the limit $\eps\to 0$ the center of vorticity performs a motion with constant speed along the $x_1=z$ axis.

\begin{proposition}
\label{prop:2}
Under Assumption \ref{ass:1}, for any $T>0$,
\begin{equation}
\label{bz}
\lim_{\eps\to 0} \max_{t\in [0, T]} |B_\eps(t) - \zeta(t)| = 0\,,
\end{equation}
with $\zeta(t)$ as in Eq.~\eqref{zett}.
\end{proposition}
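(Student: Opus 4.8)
The plan is to compute the time derivative of $B_\eps(t)$ using the weak formulation, isolate the leading-order term, and show all remainders vanish as $\eps\to 0$, then integrate. From Eq.~\eqref{growth B} (and its analogue for the first component obtained from Eq.~\eqref{weqF} with $f=x_1$), we have
\begin{equation}
\label{prop2eq1}
\dot B_\eps(t) = |\log\eps| \int\!\rmd x\,\omega_\eps(x,t)\left(F^\eps(x,t) + L(x,t) + \int\!\rmd y\,\mc R(x,y)\,\omega_\eps(y,t)\right),
\end{equation}
where $L(x,t) := \int\!\rmd y\, L(x,y)\,\omega_\eps(y,t)$, and the $\widetilde u$-contribution drops by antisymmetry of $K$. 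The $\mc R$-term is $O(1/|\log\eps|)$ by Lemma \ref{lem:1}, and by the hypothesis on $F^\eps$ in Remark \ref{rem:1}/Eq.~\eqref{fk1} its contribution to $|\log\eps|\int\omega_\eps F^\eps$ should converge (in the reduced-system statement one simply assumes $F^\eps$ small, but here — since $\zeta(t)$ in Eq.~\eqref{zett} has $F^\eps$-contribution absent — I expect that in the application the hypothesis is that this term is $O(1/|\log\eps|)$ or vanishes in the limit; in any case it is controlled). So the only term that survives in the limit is the $L$-term, and I must show
\[
|\log\eps|\int\!\rmd x\,\omega_\eps(x,t)\,L(x,t) \to \frac{1}{4\pi r_0}\begin{pmatrix}1\\0\end{pmatrix}.
\]

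The key computation is therefore the $L$-term. Since $L(x,y) = \frac{1}{4\pi x_2}\log\frac{1+|x-y|}{|x-y|}\binom{1}{0}$, and by item (1) of Theorem \ref{thm:2} the support $\Lambda_\eps(t)$ is contained in $\{|x_2-r_0|\le C|\log\eps|^{-k}\}$, I can replace $x_2$ in the prefactor by $r_0$ at the cost of a relative error $O(|\log\eps|^{-k})$. Then I am left with estimating
\[
\frac{|\log\eps|}{4\pi r_0}\int\!\rmd x\!\int\!\rmd y\,\omega_\eps(x,t)\,\omega_\eps(y,t)\,\log\frac{1+|x-y|}{|x-y|}.
\]
Now $\log\frac{1+|x-y|}{|x-y|} = -\log|x-y| + \log(1+|x-y|)$; the second piece is bounded and, since $\int\omega_\eps = 1/|\log\eps|$ by Eq.~\eqref{w=1}, contributes $|\log\eps|\cdot O(|\log\eps|^{-2}) = O(|\log\eps|^{-1})\to 0$. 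For the first piece I need $-|\log\eps|\int\!\int\omega_\eps(x)\omega_\eps(y)\log|x-y|\,\rmd x\,\rmd y \to \frac{1}{4\pi}$ after multiplying by $\frac{1}{r_0}$... wait, let me recount: this double integral is $|\log\eps|$ times a quantity of order $(\int\omega_\eps)^2 |\log\eps| = |\log\eps|^{-1}$, so the product is $O(1)$. The precise value comes from Lemma \ref{lem:4}: a fraction $1-O(1/\log|\log\eps|)$ of the mass sits in a disk of radius $\eps|\log\eps|$, so $|x-y|\lesssim \eps|\log\eps|$ for most pairs, giving $-\log|x-y| = |\log\eps|(1+o(1))$; hence $-|\log\eps|\int\!\int\omega_\eps\omega_\eps\log|x-y| = |\log\eps|\cdot|\log\eps|(1+o(1))\cdot(1/|\log\eps|)^2(1+o(1)) = 1 + o(1)$. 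Multiplying by $\frac{1}{4\pi r_0}$ gives the claimed leading term $\frac{1}{4\pi r_0}$.

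The main obstacle is making the last estimate rigorous and uniform in $t\in[0,T]$: I must show that the pairs with $|x-y|$ not comparable to $\eps|\log\eps|$ contribute negligibly. The upper tail ($|x-y|$ large, up to $O(|\log\eps|^{-k})$) contributes $-\log|x-y| = O(\log|\log\eps|)$ per pair, and the mass outside the small disk is $O(1/(|\log\eps|\log|\log\eps|))$ by Lemma \ref{lem:4}, so this is $|\log\eps|\cdot O(\log|\log\eps|)\cdot O(1/|\log\eps|)\cdot O(1/(|\log\eps|\log|\log\eps|)) = O(1/|\log\eps|)\to 0$; the very-close pairs ($|x-y|\ll\eps$, down to $0$) are controlled by the $L^\infty$ bound Eq.~\eqref{omega_t} via the same rearrangement argument used in Lemma \ref{lem:1}, contributing $O(1/|\log\eps|)$. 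Assembling: $\dot B_\eps(t) = \frac{a}{4\pi r_0}\binom{1}{0} + o(1)$ uniformly on $[0,T]$ (restoring general $a$), and since $B_\eps(0)\to\zeta^0$, integrating in time yields Eq.~\eqref{bz}. Uniformity in $t$ follows because every error bound above is uniform in $t\in[0,T_\eps^0]=[0,T]$.
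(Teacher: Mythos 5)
Your proposal follows essentially the same route as the paper: compute $\dot B_\eps$ from the weak formulation, kill the $\widetilde u$-term by antisymmetry of $K$, bound the $F^\eps$- and $\mc R$-contributions by $C/|\log\eps|$, and identify the limit of the $L$-term by combining the concentration estimate of Lemma \ref{lem:4} (for the lower bound) with the $L^\infty$-rearrangement bound of Lemma \ref{lem:1} (for the upper bound), using the radial localization of Section \ref{sec:3} to replace the prefactor $1/(4\pi x_2)$ by $1/(4\pi r_0)$. The only point to tighten is your hedging on the $F^\eps$-term: no additional hypothesis is needed, since Assumption \ref{ass:1}(b) gives $|F^\eps|\le C_F/|\log\eps|$ and Eq.~\eqref{w=1} gives $\int\!\rmd x\,\omega_\eps = 1/|\log\eps|$, so $|\log\eps|\int\!\rmd x\,\omega_\eps F^\eps = O(1/|\log\eps|)$ directly, exactly as in the paper's Eq.~\eqref{b12}.
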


\begin{proof}
Since $\Lambda_\eps(t)$ is compact we can use Eq.~\eqref{weqF} as in getting Eq.~\eqref{growth B} and compute,
\begin{equation}
\label{bpunto}
\begin{split}
	\dot B_\eps(t) & = |\log\eps|\frac{\rmd}{\rmd t} \int\!\rmd x\, x\,\omega_\eps(x,t) =  |\log\eps| \int\!\rmd x\, \omega_\eps(x,t) \, (u+F^\eps)(x,t) \\ & =  |\log\eps|\int\!\rmd x\, \omega_\eps(x,t) \, F^\eps (x,t) \\ & \quad +  |\log\eps|\int\!\rmd x\, \omega_\eps(x,t) \int\!\rmd y\,  [L(x,y) + \mc R(x,y)]\omega_\eps(y,t)\,,
\end{split}
\end{equation}
where we used Eq.~\eqref{decom_u} and $\int\!\rmd x\, \omega_\eps(x,t) \widetilde u(x,t) =0$.
In what follows, we fix $T>0$ and $k\in \big(0,\frac 12\big)$, and assume the parameter $\eps$ so small in order that Eq.~\eqref{lem1b} does hold and that Eq.~\eqref{stimGa} implies $T^0_\eps=T$. Therefore, from Eq.~\eqref{bpunto},  \eqref{2Lipsc}, in view of Eqs.~\eqref{bc_bound}, \eqref{bc_bound2}, and \eqref{lrest}, we have,
\begin{equation}
\label{b12}
|\dot B_{\eps,1}(t) - Q_\eps(t)| + |\dot B_{\eps,2}(t)| \le  \frac{C}{|\log\eps|}\quad \forall\, t\in [0, T] \,,
\end{equation}
where
\[
Q_\eps(t) := |\log\eps|\int\!\rmd x\, \omega_\eps(x,t) \frac{1}{4\pi x_2} \int\!\rmd y\, \log\frac {1+|x-y|}{|x-y|} \omega_\eps(y,t) \,.
\]
To determine the behavior of $Q_\eps(t)$ as $\eps\to 0$,  we decompose,
\[
Q_\eps(t) = Q_\eps^1(t) + Q_\eps^2(t)\,,
\]
with 
\[
\begin{split}
Q_\eps^1(t) & :=  |\log\eps| \int_{\Sigma(q_\eps(t), \eps|\log\eps|)}\!\rmd x\, \omega_\eps(x,t) \\ & \quad \times  \frac{1}{4\pi x_2} \int_{\Sigma(q_\eps(t), \eps|\log\eps|)}\!\rmd y\, \log\frac {1+|x-y|}{|x-y|} \omega_\eps(y,t)\,.
\end{split}
\]
The rest $Q_\eps^2(t) = Q_\eps(t) - Q_\eps^1(t)$ is the sum of three terms, each one is the integration of the same function, which in view of Eq.~\eqref{lrest} is bounded by 
\[
\mc G(x,y) := \frac{1}{2\pi r_0} \log\frac {1+|x-y|}{|x-y|} \omega_\eps(x,t)\omega_\eps(y,t)\,,
\]
and in the integration domain at least one between the $x$ and the $y$ variable is contained in the set $\Sigma(q_\eps(t), \eps|\log\eps|)^\complement$. Therefore, since $\mc G$ is a symmetric function, 
\[
\begin{split}
Q_\eps^2(t) & \le \frac{3|\log\eps|}{2\pi r_0} \int_{\Sigma(q_\eps(t),\eps|\log\eps|)^\complement}\!\rmd x\, \omega_\eps(x,t) \int\!\rmd y\, \log\frac {1+|x-y|}{|x-y|} \omega_\eps(y,t)  \\ & \le  \frac{C}{\log|\log\eps|} \,,
\end{split}
\]
where we first bounded the $\rmd y$-integral as done in Eq.~\eqref{311b}, and then we used   Eq.~\eqref{lem1b}.

Concerning $Q_\eps^1(t)$, we can obtain a lower bound for it, by inserting a lower bound to the function $\frac{1}{4\pi x_2}\log\frac {1+|x-y|}{|x-y|}$ in the domain of integration and applying again Eq.~\eqref{lem1b}, 
\begin{equation}
\label{q2}
\begin{split}
	Q_\eps^1(t) & \ge \frac{|\log\eps|}{4\pi (q_{\eps,2}(t)+\eps|\log\eps|)} \log\frac {1+2\eps|\log\eps|}{2\eps|\log\eps|} \bigg(\int_{\Sigma(q_{\eps}(t),\eps|\log\eps|)}\!\rmd x\, \omega_\eps(x,t)\bigg)^2 \\ & \ge  \frac{|\log\eps|}{4\pi (q_{\eps,2}(t)+\eps|\log\eps|)} \log\frac {1+2\eps|\log\eps|}{2\eps|\log\eps|} \frac{1}{|\log \eps|^2}\bigg( 1 - \frac{C_1}{\log|\log\eps|}\bigg)^2\,.
\end{split}
\end{equation}
On the other hand, by Eqs.~\eqref{omega_t} and \eqref{311b}, we can obtain an upper bound
for $Q_\eps^1(t)$,
\begin{equation}
\label{q3}
\begin{split}
	Q_\eps^1(t) & \le \frac{1}{4\pi (q_{\eps,2}(t)-\eps|\log\eps|)} \sup_x \int\!\rmd y\, \log\frac {1+|x-y|}{|x-y|} \omega_\eps(y,t) \\ & \le  \frac{3M}{2\eps^2 |\log\eps|(q_{\eps,2}(t)-\eps|\log\eps|)}  \bigg\{\frac{\bar\rho^2}{2} \log\frac{1+\bar\rho}{\bar\rho} - \frac 12 \int_0^{\bar\rho}\!\rmd \rho\, \frac{\rho}{1+\rho} \bigg\}\,, 
\end{split}
\end{equation}
with $\bar\rho$ such that $3\pi\bar\rho^2 M/(\eps^2|\log\eps|)= 1/|\log\eps|$. Now, in view of Eq.~\eqref{lem1b}, the disk $\Sigma(q_\eps(t), \eps|\log\eps|)$ must have non empty intersection with $\Lambda_\eps(t)$. Therefore, since we are assuming $\eps$ so small that $T^0_\eps=T$, from Eq.~\eqref{stimGa} we deduce that
\begin{equation}
\label{q2r0}
\max_{t\in[0,T]}|q_{\eps,2}(t)-r_0| \leq \frac{C}{|\log\eps|^k}+ \eps |\log\eps|\,.
\end{equation}

We conclude that the right-hand side in both Eqs.~\eqref{q2} and \eqref{q3} converges to $1/(4\pi r_0)$ as $\eps\to 0$, so that, in view of Eq.~\eqref{b12},
\begin{equation}
\label{b1b}
\lim_{\eps\to 0} \max_{t\in [0, T]} \bigg|B_{\eps,1}(t) - \bigg(z_0 +\frac{t}{4\pi r_0}\bigg) \bigg| = 0 \,,
\end{equation}
which, together with Eq.~\eqref{compact2}, proves Eq.~\eqref{bz} (recall we fixed $a=1$).
\end{proof}

From Eq.~\eqref{lem1b} and Proposition \ref{prop:2}, the proof of item (2) of Theorem \ref{thm:2} is completed if we show that
\begin{equation}
\label{q1b1}
\lim_{\eps\to 0} \sup_{t\in [0,T]} [B_\eps(t) - q_\eps(t)] = 0
\end{equation}
(actually, by Eq.~\eqref{q2r0}, the convergence of the second component is already known, but this does not shorten the proof). To this aim, we set $\Sigma_t = \Sigma(q_\eps(t), \eps|\log\eps|)$ and compute,
\[
\begin{split}
|B_\eps(t) - q_\eps(t)| & \le |\log\eps|\int\! \rmd x\, |x - q_\eps(t)| \omega_\eps(x,t) \\ & = |\log\eps|\int_{\Sigma_t}\! \rmd x\, |x - q_\eps(t)| \omega_\eps(x,t) +  |\log\eps|\int_{\Sigma_t^\complement}\! \rmd x\, |x - q_\eps(t)| \omega_\eps(x,t) \\ & \le  \eps |\log\eps| +  \frac{C_1}{\log|\log\eps|} |B_\eps(t) - q_\eps(t)| \\ & \quad +  |\log\eps|\int_{\Sigma_t^\complement}\! \rmd x\, |x - B_\eps(t)| \omega_\eps(x,t)\,.
\end{split}
\]
where we used Eq.~\eqref{lem1b}. Therefore, by assuming $\eps$ so small to have $2C_1 \le \log|\log\eps|$,
\[
\begin{split}
|B_\eps(t) - q_\eps(t)| & \le 2 \eps |\log\eps| + \\ & \quad + 2   \sqrt{|\log\eps|\int_{\Sigma_t^\complement}\! \rmd x\, \omega_\eps(x,t)} \sqrt{|\log\eps|\int_{\Sigma_t^\complement}\! \rmd x\,  |x - B_\eps(t)|^2 \omega_\eps(x,t)} \\ & \le  2 \eps |\log\eps| + 2  \sqrt{\frac{C_1}{\log|\log\eps|}} \sqrt{ |\log\eps| J_\eps(t)}\,,
\end{split}
\]
where we applied again Eq.~\eqref{lem1b}, the Cauchy-Schwarz inequality, and introduced the moment of inertia with respect to center of vorticity defined as 
\begin{equation}
\label{J}
J_\eps(t) = \int \!\rmd x\,  |x-B_\eps(t)|^2 \omega_\eps(x,t)\,.
\end{equation}

Now, we claim that
\begin{equation}
\label{Jst}
J_\eps(t)\le \frac{C}{|\log\eps|} \quad \forall\, t\in [0,T]\,,
\end{equation} 
from which Eq.~\eqref{q1b1} follows in view of the above estimate on $|B_\eps(t) - q_\eps(t)| $. To prove the claim, we compute the time derivative of $J_\eps(t)$, by using Eq.~\eqref{weqF},
\[
\dot J_\eps(t) = 2\int \!\rmd x\, \omega_\eps(x,t) \, (x-B_\eps(t)) \cdot (u(x,t)+F^\eps(x,t) - \dot B_\eps(t))  \,,
\]
so that, in view of Eq.~\eqref{bpunto},
\[
\begin{split}
\dot J_\eps(t) & =  2 \int \rmd x \, \omega_\eps(x,t) \left[ u(x,t)-|\log\eps|\int \rmd y \, \omega_\eps(y,t) \, u(y,t) \right] \cdot (x-B_\eps(t))  \\ & \quad + 2 \int \rmd x \, \omega_\eps(x,t) \left[ F^\eps(x,t)-|\log\eps|\int \rmd y \, \omega_\eps(y,t) \, F^\eps(y,t) \right] \cdot (x-B_\eps(t)) \,.
\end{split}
\]
We consider first  the term containing $F^\eps$ and note that, by definition of $B_\eps(t)$,
\[ 
\begin{split}
& \int \rmd x\, \omega_\eps(x,t)\, (x-B_\eps(t)) \cdot \int\! \rmd y \, \omega_\eps(y,t)\,  F^\eps (y,t)   = 0 \,, \\ &   \int\! \rmd x\,  \omega_\eps(x,t)\, (x-B_\eps(t)) \cdot F^\eps(B_\eps(t), t) =0 \,. 
\end{split}\]
We thus obtain,
\[
\begin{split} 
& 2 \left|\int\! \rmd x \, \omega_\eps(x,t) \left[ F^\eps(x,t)-|\log\eps|\int\!\rmd y\, \omega_\eps(y,t) \, F^\eps (y,t) \right] \cdot (x-B_\eps(t)) \right| \\ & \quad = 2 \left| \int\! \rmd x \, \omega_\eps(x,t)  \left[ F^\eps(x,t)- F^\eps (B_\eps(t),t) \right] \cdot (x-B_\eps(t)) \right| \\ & \quad \le 2\int\! \rmd x \, \omega_\eps(x,t)\, \frac{ L}{|\log\eps|} |x-B_\eps(t)|^2  \le  \frac{2L}{|\log\eps|} \, J_\eps(t)\,, 
\end{split}
\]
where, in the last line, we used Eq.~\eqref{2Lipsc}. For  the term containing $u$, we have analogously,
\[ 
\int\! \rmd x\, \omega_\eps(x,t) \, (x-B_\eps(t)) \cdot \int\! \rmd y \, \omega_\eps(y,t)\, u(y,t) =0\,. 
\]
Moreover, by the  antisymmetry of $K$ and using Eq.~\eqref{decom_u},
\begin{equation}
\label{antisym}
\int\! \rmd x \, \omega_\eps(x,t) \, \widetilde{u}(x,t) = \int\! \rmd x \int\! \rmd y \, \omega_\eps(x,t)\,\omega_\eps(y, t) \, K(x-y) = 0, 
\end{equation}
so that, as $(x-y) \cdot K(x-y) =0$,
\[
\begin{split} 
\int\! \rmd x \, \omega_\eps(x,t) \, x \cdot \widetilde{u}(x,t) = & \int\! \rmd x \int\! \rmd y \, \omega_\eps(x,t)\,\omega_\eps(y, t) \, x \cdot K(x-y) \\ = & \int\! \rmd x \int\! \rmd y \, \omega_\eps(x,t)\,\omega_\eps(y, t) \, y \cdot K(x-y)\,,
\end{split}
\]
which implies that also this integral is zero by the antisymmetry of $K$. Therefore,
\[
\begin{split}
& 2 \left| \,\int \rmd x \, \omega_\eps(x,t) \left[ u(x,t)-|\log\eps|\int \rmd y \, \omega_\eps(y,t) \, u(y,t) \right] \cdot (x-B_\eps(t)) \, \right|  \\ & \quad \le 2 \int\! \rmd x \, \omega_\eps(x,t) \left|\int\!\rmd y\, L(x,y)\, \omega_\eps(y,t) + \int\!\rmd y\, \mc R(x,y)\, \omega_\eps(y,t)  \right| \, |x-B_\eps(t)| \\ & \quad \le C \int\! \rmd x \, \omega_\eps(x,t) \, |x-B_\eps(t)| \le \frac{C}{|\log\eps|^{1/2}} \, \sqrt{J_\eps(t)}\,,
\end{split}
\]
where we have used Eq.~\eqref{bc_bound2} and Cauchy-Schwarz inequality. 

In conclusion,
\[ 
|\dot J_\eps(t)| \le \frac{2L}{|\log\eps|}\,J_\eps(t) +\frac{C}{|\log\eps|^{1/2}} \, \sqrt{J_\eps(t)}\,. 
\]
Recalling that the initial data imply $J_\eps(0) \le 4\eps^2$, this differential inequality implies Eq.~\eqref{Jst}. The proof of item (2) of Theorem \ref{thm:2} is thus completed.

\appendix 

\section{Proof of Lemma \ref{lem:4}}
\label{app:a}

In absence of external field, the proof of the concentration estimate Eq.~\eqref{lem1b} given in \cite{BCM00} is based on the conservation along the motion of the kinetic energy $E=  \frac 12 \int\!\rmd\bs\xi\, |\bs u (\bs\xi,t)|^2$, which in cylindrical coordinates $x=(x_1,x_2) = (z,r)$ takes the form
\[
E =  \frac 12 \int\! \rmd x \,  2\pi x_2 |u(x,t)|^2\,.
\]
More precisely, the assumptions on the initial vorticity, together with Eq.~\eqref{cons-omr} and the conservation of the quantities
\[ 
M_0  = \int\! \rmd x\, \omega_\eps(x,t)\,, \qquad M_2  = \int\! \rmd x \, x_2^2 \omega_\eps(x,t)
\]
allow to compute the asymptotic behavior as $\eps\to 0$ of the energy $E$, from which the desired concentration estimate is deduced. 

In the present case, since the vector field $\bs F^\eps = (F^\eps_z,F^\eps_r,F^\eps_\theta) := (F^\eps_1,F^\eps_2,0)$ has zero divergence, the conservation law of $M_0$ is still valid by Liouville's theorem and Eq.~\eqref{cons-omr},
\begin{equation}
\label{m0c}
M_0(t) = \frac{1}{2\pi} \int\! \rmd\bs\xi\, \frac{\omega_\eps(\bs\xi,t)}{r} =  \int\! \rmd\bs\xi_0 \, \frac{\omega_\eps(\bs\xi_0,0)}{r_0} = M_0(0)
\end{equation}
(above, the change of variables\ is $\bs\xi=\phi^t(\bs\xi_0)$ with $\phi^t$ the flow generated by $\dot{\bs\xi} = \bs u(\bs\xi,t) + \bs F^\eps(\bs\xi,t)$).

Concerning the variation of $M_2$, since $\omega_\eps(x,t)$ has compact support, we can apply Eq.~\eqref{weqF} with $f(x,t) = x_2^2$, so that  
\[
\dot M_2 = \int\! \rmd x \, \omega_\eps(x,t) \,2 x_2  F^\eps_2(x,t) ,
\]
which implies $|\dot M_2| \le 2C_F |\log\eps|^{-3/2} \sqrt{M_2}$ in view of Assumption \ref{ass:1}, item (b). Therefore, using also Eq.~\eqref{initialF}, 
\[
M_2 \le 2(|\zeta^0_2| + \eps)^2 M_0 + 32C_F^2T^2 |\log\eps|^{-3} \le \const |\log\eps|^{-1}\,.
\] 
This is the same estimate, but for a larger constant, which is obtained in absence of $F^\eps$. On the other hand, the particular value of this constant is easily seen to be irrelevant in the proof of \cite[Thm.~1]{BCM00}.

Concerning the variation of $E$, we observe that in the proof of \cite[Thm.~1]{BCM00}  the energy conservation is used to deduce a lower bound of the form $E>C^*/|\log\eps| + \const /|\log\eps|^2$, because such estimate can be easily shown to be true for the energy computed at time zero. The crucial point in the argument is that $C^*$ is the same constant appearing in the leading term of an upper bound deduced for the energy $E$ at any time. Therefore, the proof given in \cite{BCM00} is valid also in the present case provided $|\dot E| \le \const/|\log\eps|^2$.   

To compute $\dot E$, we introduce the stream function
\[
\Psi(x,t) = \int\!\rmd y\, S(x,y)\, \omega_\eps(y,t)\,, 
\]
where the Green function $S(x,y)$ reads
\[
S(x,y) := \frac{x_2y_2}{2\pi} \int_0^\pi\!\rmd\theta\, \frac{\cos\theta}{\sqrt{|x-y|^2 + 2x_2y_2(1-\cos\theta)}}\,,
\]
so that $u(x,t) = x_2^{-1} \nabla^\perp\Psi(x,t)$ and the energy takes the form (see, e.g., \cite{BCM00,F})
\[
E = \pi \int\! \rmd x\, \Psi(x,t) \, \omega_\eps(x,t) = \pi \int\!\rmd x \int\!\rmd y\, S(x,y)\, \omega_\eps(x,t) \, \omega_\eps(y,t)\,.
\]
As before, since $\omega_\eps(x,t)$ has compact support we can apply Eq.~\eqref{weqF}, so that, since $u\cdot \nabla \Psi=0$,
\[
\dot E = \pi \int\!\rmd x\, \omega_\eps(x,t) \, (F^\eps \cdot \nabla \Psi + \partial_t \Psi)(x,t)\,,
\]
where, again from Eq.~\eqref{weqF},
\[
\partial_t\Psi(x,t) = \int\!\rmd y\, \omega_\eps(y,t)\, (u+F^\eps)(y,t) \cdot \nabla_y S(x,y)\,.
\]
Since $S(x,y)$ is symmetric then $\Psi(y,t) = \int\!\rmd x\, S(x,y)\, \omega_\eps(x,t)$, whence
\[
\begin{split}
\int\!\rmd x\, \omega_\eps(x,t)\, \partial_t \Psi (x,t) & = \int\!\rmd x\, \omega_\eps(x,t) \int\!\rmd y\, \omega_\eps(y,t)\, (u+F^\eps)(y,t) \cdot \nabla_y S(x,y) \\ & = \int\!\rmd y\, \omega_\eps(y,t)\, (u+F^\eps)(y,t) \cdot \nabla_y \int\!\rmd x\,S(x,y)\, \omega_\eps(x,t)\\ &  = \int\!\rmd y\, \omega_\eps(y,t) \, (F^\eps \cdot \nabla \Psi)(y,t)\,, 
\end{split} 
\]
where we used again the orthogonality condition $u\cdot \nabla \Psi=0$. In conclusion,
\[
\begin{split}
\dot E & = 2 \pi \int\!\rmd x\, \omega_\eps(x,t) (F^\eps \cdot \nabla \Psi)(x,t) \\ & = \ 2\pi \int\!\rmd x \!\int\! \rmd y\, \omega_\eps(x,t)\, F^\eps(x,t)\cdot  \nabla_xS(x,y)\, \omega_\eps(y,t)\,.
\end{split}
\]
Now, by direct computation (or simply recalling that $\nabla^\perp\Psi(x,t) = x_2 u(x,t)$ together with Eqs.~\eqref{u=}, \eqref{H1}, and \eqref{H2}), 
\[
\begin{split}
\frac{\partial S}{\partial x_1} & = \frac{x_2y_2}{2\pi} \int_0^\pi\!\rmd\theta\, \frac{(y_1-x_1)\cos\theta}{[|x-y|^2 + 2x_2y_2(1-\cos\theta)]^{3/2}}\,, \\ \frac{\partial S}{\partial x_2} & = \frac{x_2y_2}{2\pi} \int_0^\pi\!\rmd\theta\, \frac{y_2-x_2\cos\theta}{[|x-y|^2 + 2x_2y_2(1-\cos\theta)]^{3/2}}\,.
\end{split}
\]
Therefore, letting
\[
a := \frac{|x-y|}{\sqrt{x_2y_2}}
\]
and introducing the integrals 
\[
I_1(a) = \int_0^\pi\!\rmd\theta\, \frac{\cos\theta}{[a^2+2(1-\cos\theta)]^{3/2}}\, , \qquad I_2(a) = \int_0^\pi\!\rmd\theta\, \frac{1-\cos\theta}{[a^2+2(1-\cos\theta)]^{3/2}}\,,
\]
the gradient $\nabla_x S$ reads
\[
\nabla_xS(x,y) = \frac{I_1(a)(y-x)}{2\pi\sqrt{x_2y_2}}  + \frac{I_2(a)}{2\pi}\sqrt{\frac{y_2}{x_2}} \begin{pmatrix} 0 \\ 1 \end{pmatrix} =: A(x,y) + B(x,y),
\]
with $A(x,y)$ anti-symmetric, so that
\[
\begin{split}
\dot E & = \pi \int\!\rmd x \!\int\! \rmd y\, \omega_\eps(x,t) [F^\eps(x,t) - F^\eps(y,t)] \cdot A (x,y) \, \omega_\eps(y,t) \\ & \quad + 2\pi \int\!\rmd x \!\int\! \rmd y\, \omega_\eps(x,t) F^\eps(x,t)\cdot B(x,y) \, \omega_\eps(y,t)\,.
\end{split}
\]
In view of Assumption \ref{ass:1}, item (b), and noticing that $\div(x_2 F^\eps) =0$ implies $F^\eps_2((x_1,0),t)=0$, we deduce that
\[
\begin{split}
& \big|(F^\eps(x,t) - F^\eps(y,t)) \cdot A (x,y)\big| \le \frac{L}{2\pi|\log\eps|} I_1(a) \frac{|y-x|^2 }{\sqrt{x_2y_2}} \,, \\ & \big| F^\eps(x,t)\cdot B(x,y) \big| = \big| (F^\eps(x,t) - F^\eps(x_1,0,t)) \cdot B(x,y) \big| \le \frac{L}{2\pi|\log\eps|} I_2(a) \sqrt{x_2y_2}\,. 
\end{split}
\]
We now recall the following estimates, see, e.g., \cite[Appendix]{Mar99},
\[
\begin{split}
I_1(a) & \le \int_0^\pi\!\rmd\theta\, \frac{\cos\frac\theta 2}{[a^2+2(1-\cos\theta)]^{3/2}} = \int_0^\pi\!\rmd\theta\, \frac{\cos\frac\theta 2}{[a^2+4\sin^2\frac\theta 2]^{3/2}} = \frac{2}{a^2\sqrt{a^2+4}}\,, \\ I_2(a) & \le \frac 12\log\left(2+\sqrt{a^2+4}\right) - \frac 12 \log a + \const \\ & = \frac 12 \log\left(2\sqrt{x_2y_2} + \sqrt{b+4x_2y_2}\right) -\frac 14 \log b + \const\,,
\end{split}
\]
where in the last identity we introduced the quantity $b=|x-y|^2$ as in \cite{BCM00}. Therefore,
\[
\big| (F^\eps(x,t) - F^\eps(y,t)) \cdot A (x,y)\big| \le  \frac{\const}{|\log\eps|}\sqrt{x_2y_2}\,,
\]
\[
\begin{split}
\big| F^\eps(x,t)\cdot B(x,y) \big| & \le \frac{\const}{|\log\eps|} \\ & \times \sqrt{x_2y_2} \left[ \log\left(2\sqrt{x_2y_2} + \sqrt{b+4x_2y_2}\right) -\frac 12 \log b + \const \right]\,,
\end{split}
\]
so that 
\[
\begin{split}
|\dot E| & \le \frac{\const}{|\log\eps|} \int\!\rmd x \!\int\! \rmd y\, \omega_\eps(x,t) \omega_\eps(y,t) \\ & \quad \times \sqrt{x_2y_2} \left\{\const + \log\left(2\sqrt{x_2y_2} + \sqrt{b+4x_2y_2}\right) -\frac 12 \log b \right\}\,.  
\end{split}
\]
Except for the prefactor $\const/|\log\eps|$, the right-hand side in the above inequality equals the first upper bound on $E$ appearing in \cite[Eq.\ (2.30)]{BCM00}, from which the estimate $E \le \const/|\log\eps|$ is deduced. We conclude that $|\dot E| \le \const / |\log\eps|^2$ as required.
\qed

\end{document}